\newcommand\ran{\operatorname{ran}}
\newcommand{\rem}[1]{}
\newtheorem{theorem}{Theorem}[section]
\newtheorem{corollary}[theorem]{Corollary}
\newtheorem{lemma}[theorem]{Lemma}
\newtheorem{proposition}[theorem]{Proposition}
\theoremstyle{definition}
\newtheorem{definition}[theorem]{Definition}
\newtheorem{example}[theorem]{Example}
\newtheorem{remark}[theorem]{Remark}
\renewcommand\leq{\leqslant}
\renewcommand\geq{\geqslant}
\def\dd{\mathrm{d}}
\def\NN{\mathbb N}
\def\RR{\mathbb R}
\newcommand{\cL}{\mathcal{L}}
\newcommand{\cS}{\mathbf{S}}
\newcommand{\rL}{\mathrm{L}}
\newcommand{\rC}{\mathrm{C}}
\newcommand{\rF}{\mathrm{F}}
\newcommand{\rReg}{\mathrm{Reg}}
\def\fs#1{\textcolor[rgb]{0.0,0.0,0.0}{#1}}
\title{A refinement of Baillon's theorem on maximal regularity}
\author[B.~Jacob]{Birgit Jacob}
\address[BJ, JW]{University of Wuppertal, School of Mathematics and Natural Sciences, IMACM, Gau\ss stra\ss e 20, D-42119 Wuppertal, Germany.}
 \email{bjacob@uni-wuppertal.de,wintermayr@uni-wuppertal.de}   
\author[F.L.~Schwenninger]{Felix L.~Schwenninger}
\address[FLS]{Department of Applied Mathematics, University of Twente, P.O.~Box 217, 
7500 AE Enschede, 
The Netherlands and \newline Department of Mathematics, Center for Optimization and Approximation, University of Hamburg, 
Bundesstr. 55, 20146 Hamburg, Germany}
\email[corresponding author]{f.l.schwenninger@utwente.nl}
\author[J.~Wintermayr]{Jens Wintermayr}
\begin{document}

\begin{abstract}
		By Baillon's result, it is known that maximal regularity with respect to the space of continuous functions is rare; 
		it implies that either the involved semigroup generator is a bounded operator or the considered space contains $c_{0}$.
		We show that the latter alternative can be excluded under a refined condition resembling maximal regularity with respect to $\mathrm{L}^{\infty}$.
\end{abstract}
	
	\subjclass[2020]{Primary 47D06, 35K90; Secondary 47B37}

\keywords{Maximal regularity, Baillon's theorem, strongly continuous semigroup, admissible operator, uniformly continuous semigroup}
	
\maketitle

\numberwithin{equation}{section}
\section{Introduction}
The question whether the solutions to an abstract Cauchy problem 
	\begin{equation}\label{acp}
	\arraycolsep=1pt\def\arraystretch{1.5}
		\begin{array}{ccl}\frac{\mathrm{d}}{\mathrm{d}t}x(t)&=&Ax(t)+f(t),\qquad t\in [0,\tau], \\x(0)&=&0,\end{array}
	\end{equation}
	where $A$ generates a strongly continuous semigroup $\cS=(S(t))_{t\ge0}$ on a Banach space $X$, 
preserve the regularity of the inhomogeneity $f:[0,\tau]\to X$ is omnipresent in the study of parabolic equations. They turn out to be particularly useful for investigating nonlinear equations, see e.g.\ \cite{Aman95,DenkHiebPrue03,KunstWeis04,lunardi} and the references therein. More precisely, \emph{maximal regularity} of the semigroup (or, equivalently, the generator) requires that  $\frac{d}{dt}x$ and $Ax$ have the same regularity as $f$, e.g.\ that $\frac{d}{dt}x$ and $Ax$ are well-defined in $\rL^{p}((0,\tau),X)$ for any $f\in \rL^{p}((0,\tau),X)$, $\tau>0$, where $x:[0,\tau]\to X$ refers to the mild solution to \eqref{acp}. This property is equivalent to an inequality of the form
\begin{equation}\label{maxreg1}
\left\|A\int_{0}^{\cdot}S(\cdot-s)f(s)\mathrm{d}s\right\|_{\rL^{p}((0,\tau),X)} \leq \kappa_{\tau}\|f\|_{\rL^{p}((0,\tau),X)},
\end{equation}
for some constant $\kappa_{\tau}>0$ and all $f\in \rL^p((0,\tau),X)$. The theory on maximal regularity has started with the works by de Simon and Sobolevskii \cite{deSi64,Sobo64}, who showed that analyticity of the semigroup is a sufficient condition on Hilbert spaces when $p\in(1,\infty)$. In fact, also on general Banach spaces, analyticity is necessary for maximal regularity and the property is independent of the particular choice $p\in(1,\infty)$, \cite{CannVesp86,CoulLamb86,Dore00}, see also \cite{CronSimo11} for the case of continuous functions. However, this characterization fails to be true for general non-Hilbert spaces, as was shown by Kalton--Lancien \cite{KaltLanc00}, see also \cite{Fack13,Fack14}. The appropriate replacement for UMD spaces was found by Weis to be the property that $A$ is \emph{$R$-sectorial}, \cite{Weis01}. 

On the other hand, the cases $p=1$ and $p=\infty$  are exotic in a way. In 1980, Baillon \cite{Bail80} proved that maximal regularity with respect to the space of the continuous functions---i.e.\ replacing ``$\rL^{p}((0,\tau),X)$'' by ``$\rC([0,\tau],X)$'' in \eqref{maxreg1}---implies that $A$ must be bounded when $X$ does not contain an isomorphic copy of the sequence space $c_{0}$. A rather simple example, due to T.~Kato, of an unbounded operator $A$ on $c_{0}$---which had been known prior to Baillon's work---shows that the latter assumption on $X$ cannot be dropped in general. Note that a simplified proof of Baillon's result can be found in \cite{EG} and that the case $X=\rL^{2}$ had even been treated earlier in \cite{DaPrGris79}. Moreover, it is not hard to see that in Baillon's result ``$\rC$-maximal regularity'' may be replaced by ``$\rL^{\infty}$-maximal regularity'', see also \cite{GuerDela95}. The dual situation of $\rL^{1}$-maximal regularity was covered by Guerre-Delabri\`ere \cite{GuerDela95}.
In \cite{Travis}, see also the comments in \cite{ChyaShawPisk99}, Travis (implicitly) showed that $\rC$-maximal regularity is equivalent to the property that $\cS$ is of  \emph{bounded semivariation} on some interval $[0,\tau]$, i.e., 
\begin{equation}
\label{eq:SV}
 \operatorname{var}_0^\tau(\cS):= \sup_{\|x_{i}\|\leq 1,\; 0={}t_{1}<t_{2}<\dots<t_{n}=\tau, \; n\in\mathbb{N}}\left\|\sum_{i=1}^{n-1}(S(t_{i})-S(t_{i+1}))x_{i}\right\|<\infty,
 \end{equation}
and, moreover, that this is equivalent to the property that every weak solution of \eqref{acp} is in fact a classical solution. Travis' proof yet reveals another characterization of $\rC$-maximal regularity; namely that
\[\Phi_{\tau}: \rC([0,\tau],X)\to X, f\mapsto A\int_{0}^{\tau}S(\tau-s)f(s)\mathrm{d}s\]
is a well-defined bounded operator. In the present work we investigate the consequence of sharpening the condition of $\rC$-maximal regularity by refining this latter property. More precisely, we study the assumption that $\Phi_{\tau}$ is even bounded from $\rL^{\infty}((0,\tau),X)$ to $X$, which---as should be emphasized--- is stronger than plain $\rL^{\infty}$-maximal regularity. Indeed, Kato's example satisfies $\rL^{\infty}$-maximal regularity, but fails to have the refined property of $\Phi_{\tau}$ being bounded from $\rL^{\infty}((0,\tau),X)$ to $X$, see Example \ref{ex:Kato} below. This is not accidental; our main result, Theorem \ref{thm:main}, states that this condition always implies that $A$ is bounded, independent of whether $X$ contains $c_{0}$ or not. Therefore, our contribution can be seen as a refinement of Baillon's result. Moreover, since any bounded generator is easily seen to imply that $\Phi_{\tau}\in\mathcal{L}(\rL^{\infty}((0,\tau),X),X)$, the result establishes yet another characterization of when a strongly continuous semigroups is in fact uniformly continuous, or, equivalently, when the generator $A$ is a bounded operator. Results of this type often rest on dichotomy laws on norm bounds for $S(t)-I$ or $tAS(t)$ as $t\to0^{+}$, see e.g.\ Lotz \cite{BookPosSg86,Lotz85}, Hille \cite{Hill50} or \cite{BerkEstMokh03}. A related argument indeed shows that in order to prove our main theorem, it would suffice to show that the norm of $\Phi_{\tau}$ tends to $0$ as $\tau\to0^{+}$, Proposition \ref{Aboundedregu}. Note that also here the crucial difference between bounds of $\Phi_{\tau}$ on $\rC([0,\tau],X)$ and $\rL^{\infty}((0,\tau),X)$ becomes apparent: Whereas in the former case Kato's example on $X=c_{0}$ shows that $\limsup_{\tau\to0^{+}}\|\Phi_{\tau}\|>0$, an analogous construction for a potential counterexample in the latter case would require $X=\ell^{\infty}$. But then, by Lotz's result \cite{Lotz85}, $A$ was bounded trivially. Unfortunately, this sufficient (and necessary) condition on norm convergence of $\Phi_{\tau}$ seems to be hard to access. Therefore, our proof approach follows a different path, which exploits both Baillon's result as well as information on the Favard space of the semigroup, Theorem \ref{thm:Fav}. These ingredients are then combined by using a remarkable characterization of isomorphisms $R:c_{0}\to Y$ due to Lotz--Peck--Porta \cite{LotzPeckPort79} and Bourgain--Rosenthal \cite{BourRose83}, which was previously applied by van Neerven \cite[Theorem 3.2.10]{Nee} to characterize uniformly continuous semigroups on $c_{0}$ by properties of the respective Favard spaces. 
\\ It is natural to investigate the dual condition of $\Phi_{\tau}$ being bounded from $\rL^{\infty}$ to $X$, which reduces to boundedness of the mapping
\[ \Psi_{\tau}:X\to \rL^{1}((0,\tau),X),\fs{x\mapsto AS(\cdot)x},\]
which, a-priori, is well-defined on the domain of $A$. However, by means of examples it is not hard to see that this condition, \fs{which is in fact equivalent to $\rL^{1}$-maximal regularity, see \cite[Theorem 3.6]{KaltPort08} and Proposition \ref{propMR1a}}, does not imply boundedness of $A$ in general. \fs{Nevertheless, $\rL^1$-maximal regularity pertains to the study of existence and stability of nonlinear equations such as in  mathematical fluid flow \cite{DancHiebMuchTolk20,MuchDanc09,MuchDanc15}. However, there the space $X$  in \eqref{maxreg1} is replaced by some interpolation space between $X$ and the domain of $A$, which is common for results on maximal regularity. We also refer to \cite{RiFarw20}, where such an abstract $\rL^{1}$-case is in turn deduced from a classical $\rC$-maximal regularity result due to Da Prato--Grisvard \cite{DaPrGris79}.}

In Section \ref{sec:discrete} we discuss the discrete-time analogs of the previously derived results and show that the difference in maximal regularity and its refined notion vanishes in this situation, which makes this case less interesting. 
In the following subsection we give a brief recap on the notions of maximal regularity and the above mentioned conditions on $\Phi_{\tau}$ and $\Psi_{\tau}$, which can be conveniently phrased in terms of \emph{admissible} operators, a notion borrowed from infinite-dimensional systems theory \cite{Weiss89con}.

\subsection{Notions and basic properties}\label{sec_admiss}
In the following $\cS=(S(t))_{t\ge0}$ always refers to a strongly continuous semigroup on a Banach space $X$ with generator $A$. All normed spaces considered in this paper are assumed to be complex. For normed spaces $X$ and $Y$, $\mathcal{L}(X,Y)$ denotes the space of bounded linear operators from $X$ to $Y$, with the convention $\mathcal{L}(X)=\mathcal{L}(X,X)$. The domain and range of a linear, possibly unbounded, operator $B$ will be denoted by $D(B)$ and $\mathrm{ran}\,B$ respectively. Furthermore, let $\rho(B)$ refer to the resolvent set of $B$ and for $\lambda\in \rho(B)$ we write $R(\lambda,B)=(\lambda-B)^{-1}$ for the resolvent operator. We associate the following abstract Sobolev spaces with the semigroup $\cS$; the space $X_{1}=(D(A),\|\cdot\|_{D(A)})$, where $\|\cdot\|_{D(A)}$ refers to the graph norm of $A$, and $X_{-1}$, which is the completion of $X$ with respect to the norm $\|R(\lambda,A)\cdot\|$ for some fixed $\lambda\in \rho(A)$. It is well-known that $\cS$ can be uniquely extended to a strongly continuous semigroup $\cS_{-1}$ on $X_{-1}$ whose generator $A_{-1}$ extends $A$ and has domain $D(A_{-1})=X$. 
For an interval $I\subseteq \RR_+:=[0,\infty)$, $p\in [1,\infty]$ and some Banach space $U$, let $\rL^p(I,U)$, $\mathrm{Reg}(I,U)$ and $\mathrm{C}(I,U)$ refer to the spaces of Lebesgue-Bochner $p$-integrable  (equivalence classes of) functions, the regulated functions and the continuous functions, $f:I\to U$, respectively, with the usual convention for $p=\infty$. We equip both $\rC(I,U)$ and $\rReg(I,U)$ with the supremum norm. Sometimes we will use the place holder $\rF$ in $\rF(I,U)$ to formulate a statement for either of the above spaces.

\begin{definition} Let $\rF$ be either $\rL^{p}$, $p\in [1,\infty]$, or $\rC$ or $\rReg$.
	A strongly continuous semigroup $\cS:=(S(t))_{t\geq0}$ (or its generator $A$) is said to satisfy the \emph{maximal regularity property with respect to  $\rF$} or \emph{$\rF$-{maximal regularity}}, if for some $\tau>0$ and all $f\in \rF([0,\tau],X)$,
	it holds that  $(\cS\ast f)(t)\in D(A)$ for almost every $t\in (0,\tau)$ and  \[A(\cS\ast f)\in \rF([0,\tau],X),\] where 
	\begin{equation}
	(\cS\ast f)(t):=\int_{0}^t S(t-s)f(s) \dd s.
\end{equation}

\end{definition}
It is easy to see that $\cS$ has the $\mathrm{C}$-maximal-regularity property  if and only if $\cS\ast f\in \mathrm{C}([0,\tau],X_1)$, see also \cite{EG}. Furthermore, whenever $\cS$ has the $\mathrm{C}$-maximal-regularity for some $\tau>0$, then this holds for every $\tau>0$.

The following notions are central for this work.

\begin{definition}\label{def-control-admiss}
	Let $U,Y$ be Banach spaces and $\rF$ be either $\rL^{p}$, $\rReg$ or $\rC$. 
\begin{enumerate}
\item An operator $B\in \mathcal{L}(U,X_{-1})$ is called an \emph{$\rF$-admissible control operator} or \emph{$\rF$-admissible} for $\cS$, if for some (hence all) $\tau>0$ the mapping
	\begin{equation*}
	\Phi_{\tau}\colon \rF([0,\tau],U)\to X_{-1}, \quad
	u \mapsto\int_0^{\tau}S_{-1}(\tau-t)Bu(t)\dd t
	\end{equation*}
	has range in $X$, i.e.~$\ran(\Phi_{\tau})\subset X$. 
	
\item We call $C\in\mathcal{L}(X_{1},Y)$ an \emph{$\rF$-admissible observation operator} or \emph{$\rF$-admissible} for $\cS$, if for some (hence all) $\tau>0$ the operator
	\begin{equation*}
	\Psi_{\tau}\colon X_1\to \rF([0,\tau],Y),\quad x\mapsto CS(\cdot)x
	\end{equation*}
	has a bounded extension to $X$,  again denoted by $\Psi_{\tau}$. 
	\end{enumerate}
	If $\limsup_{\tau\to0^{+}}\|\Phi_{\tau}\|_{\cL(\rF([0,\tau],U),X)}=0$ or $\limsup_{\tau\to0^{+}}\|\Psi_{\tau}\|_{\cL(X,\rF([0,\tau],Y))}=0$, we say that $B$ or $C$ are \emph{zero-class $\rF$-admissible}, respectively.
\end{definition}

	The above notion of admissible operators, first coined by G.~Weiss \cite{Weiss89con,Weiss89obs}, plays an important role in the context of infinite-dimensional linear systems theory, and particularly, in the context of boundary control and observation, see also \cite{S}. 
	Note that by the Closed-Graph Theorem, $B\in \mathcal{L}(U,X_{-1})$ is $\rF$-admissible if and only if $\Phi_{\tau}$ is bounded from $ \rF([0,\tau],U)$ to $X$, i.e.\
	 there exists $K_{\tau}>0$ such that 
	\begin{equation*}
	\left\|\int_0^{\tau}S_{-1}(\tau-s)Bu(s)\dd s\right\|\leq K_{\tau} \|u\|_{\rF([0,\tau],U)}, \quad  u\in \rF([0,\tau],U).
	\end{equation*}
	This also shows that in the definition the norm $\|\Phi_{\tau}\|_{\cL(\rF([0,\tau],U),X)}$ in the above definition of zero-class admissibility is well-defined.
	Since for $p\in (1,\infty)$,
	\[\mathrm{C}(I,U)\subset \mathrm{Reg}(I,U)\subset \rL^{\infty}(I,U)\subset \rL^{p}(I,U)\subset \rL^{1}(I,U)\] for bounded intervals $I\subset\RR_+$, with continuous embeddings, there is a natural chain of implications for the property of admissible operators. 
	In particular, $B$ being a $\rC$-admissible control operator is the weakest property in the the scale of $\rF$-admissibility. Dually, any $\rF$-admissible observation operator  is $L^{1}$-admissible. 
	In the following we will only be interested in the cases where $B=A_{-1}$ is an admissible control operator or $C=A$ is an admissible observation operator.

\noindent
The following result, which is a slight extension of \cite[Proposition 16]{JacSchwZw}, marks the point of departure for Section \ref{Sec:MaxReg}.
The proof follows the same lines as in the cited reference and is therefore omitted.
\begin{proposition}\label{Aboundedregu}
	Let $A$ be the generator of a strongly continuous semigroup $\cS$. If either 
	\begin{itemize}
	\item $A$ is a zero-class $\rL^{1}$-admissible observation operator, or
	\item $A_{-1}$ is a zero-class $\rC$-admissible control operator, 
	\end{itemize}
	then $A$ is bounded.
\end{proposition}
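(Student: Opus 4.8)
The plan is to reduce both hypotheses to the single assertion that $\cS$ is uniformly continuous, i.e.\ $\|S(\tau)-I\|_{\cL(X)}\to0$ as $\tau\to0^{+}$, which is equivalent to $A\in\cL(X)$. In each case I would feed a suitable test object of norm $\|x\|$ into the relevant operator ($\Psi_{\tau}$ or $\Phi_{\tau}$) so that its image equals $S(\tau)x-x$; this estimates $\|S(\tau)-I\|$ by the operator norm, and the zero-class hypothesis then drives that norm to $0$.

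For the observation hypothesis, I would begin from the fundamental identity
\[
 S(\tau)x-x=\int_{0}^{\tau}AS(s)x\,\dd s,\qquad x\in D(A).
\]
Taking norms yields $\|S(\tau)x-x\|\le\int_{0}^{\tau}\|AS(s)x\|\,\dd s=\|\Psi_{\tau}x\|_{\rL^{1}([0,\tau],X)}\le\|\Psi_{\tau}\|\,\|x\|$. As $D(A)$ is dense in $X$ and $S(\tau)-I$ is bounded, this passes to all $x\in X$, giving $\|S(\tau)-I\|_{\cL(X)}\le\|\Psi_{\tau}\|$. Zero-class $\rL^{1}$-admissibility then forces the left-hand side to $0$.

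For the control hypothesis, I would test $\Phi_{\tau}$ against the constant function $u\equiv x$, $x\in X=D(A_{-1})$, for which $\|u\|_{\rC([0,\tau],X)}=\|x\|$. A change of variables together with the fundamental theorem of calculus for the extrapolated semigroup $\cS_{-1}$ gives
\[
 \Phi_{\tau}u=\int_{0}^{\tau}S_{-1}(\tau-t)A_{-1}x\,\dd t=\int_{0}^{\tau}S_{-1}(s)A_{-1}x\,\dd s=S_{-1}(\tau)x-x=S(\tau)x-x,
\]
where the final equality uses that $\cS_{-1}$ restricts to $\cS$ on $X$. Hence $\|S(\tau)x-x\|\le\|\Phi_{\tau}\|\,\|x\|$ for all $x\in X$, i.e.\ $\|S(\tau)-I\|_{\cL(X)}\le\|\Phi_{\tau}\|$, and the zero-class assumption again yields uniform continuity.

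The arguments are short and parallel; the only step needing genuine care is the integral identity in the control case, which must be read in the extrapolation space $X_{-1}$: one should check that $s\mapsto S_{-1}(s)A_{-1}x$ is Bochner integrable in $X_{-1}$ and that its integral coincides with $S_{-1}(\tau)x-x$ through the generator relation for $\cS_{-1}$ (recalling $D(A_{-1})=X$). Beyond this bookkeeping, no real obstacle remains, the decisive and standard ingredient being that norm-continuity of $\cS$ at the origin is equivalent to boundedness of $A$.
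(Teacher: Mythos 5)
Your proof is correct and is essentially the argument the paper intends: the proof is omitted there with a pointer to \cite[Proposition 16]{JacSchwZw}, whose method is exactly your reduction---testing $\Phi_{\tau}$ on constant functions (respectively integrating $AS(\cdot)x$ for $x\in D(A)$) to obtain $\|S(\tau)-I\|_{\cL(X)}\leq\|\Phi_{\tau}\|$ (respectively $\leq\|\Psi_{\tau}\|$), so that the zero-class hypothesis forces uniform continuity and hence boundedness of $A$. Your care with the identity $\int_{0}^{\tau}S_{-1}(s)A_{-1}x\,\dd s=S_{-1}(\tau)x-x$ in $X_{-1}$, using $D(A_{-1})=X$, is exactly the right bookkeeping.
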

In Example \ref{ex:Kato} below we will show that the assumption of zero-class admissibility cannot be dropped in the above proposition (in either case). We conclude this preparatory section with a result that gives an indication how the seemingly strong condition of $A_{-1}$ being an admissible control operator  relates to admissibility of general control operators.
\noindent

\begin{proposition}\label{prop:Adm}
Let $\cS$ be a strongly continuous semigroup on a Banach space $X$ with generator $A$. Let $\rF$ be a placeholder for either $\rC$, $\mathrm{Reg}$ or $\rL^{\infty}$. The following assertions are equivalent.
\begin{enumerate}
\item For every Banach space $U$, every operator $B\in \cL(U,X_{-1})$ is $\rF$-admissible.
\item $A_{-1}$ is $\rF$-admissible.
\end{enumerate}
\end{proposition}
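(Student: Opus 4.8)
The implication $(1)\Rightarrow(2)$ is immediate: since $A_{-1}$ generates $\cS_{-1}$ with $D(A_{-1})=X$, it belongs to $\mathcal{L}(X,X_{-1})$, so it is covered by $(1)$ upon taking $U=X$ and $B=A_{-1}$. The content lies in the converse, and my plan is to reduce the $\rF$-admissibility of an arbitrary $B\in\mathcal{L}(U,X_{-1})$ to that of $A_{-1}$ by a resolvent factorization.

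Fix $\lambda\in\rho(A)=\rho(A_{-1})$ and set $D:=R(\lambda,A_{-1})B$. Since $R(\lambda,A_{-1})$ maps $X_{-1}$ isomorphically onto $D(A_{-1})=X$, we have $D\in\mathcal{L}(U,X)$. The key algebraic step is the resolvent identity $(\lambda-A_{-1})R(\lambda,A_{-1})=I$ on $X_{-1}$, which rewrites $B$ as
\[
B=(\lambda-A_{-1})D=\lambda D-A_{-1}D,\qquad D\in\mathcal{L}(U,X).
\]
I would then insert this decomposition into the control map. For $u\in\rF([0,\tau],U)$ the function $Du$ lies in $\rF([0,\tau],X)$, with $\|Du\|_{\rF}\le\|D\|\,\|u\|_{\rF}$, because composition with a bounded operator preserves each of the classes $\rC$, $\mathrm{Reg}$, $\rL^{\infty}$. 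Consequently
\[
\int_0^{\tau}S_{-1}(\tau-t)Bu(t)\,\dd t
=-\int_0^{\tau}S_{-1}(\tau-t)A_{-1}(Du)(t)\,\dd t
+\lambda\int_0^{\tau}S(\tau-t)(Du)(t)\,\dd t.
\]

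Now I would treat the two terms separately. The first integral is exactly the control map associated with $A_{-1}$ (in the sense of Definition \ref{def-control-admiss}, with $U=X$) evaluated at $Du\in\rF([0,\tau],X)$; by the assumed $\rF$-admissibility of $A_{-1}$ it has range in $X$ and is bounded into $X$ by a constant times $\|Du\|_{\rF}$. The second integral has an $X$-valued integrand, since $Du(t)\in X$ and $S_{-1}|_X=S$, and is a genuine $X$-valued Bochner integral because $\rF([0,\tau],X)\subset\rL^1([0,\tau],X)$ on the bounded interval; it is bounded into $X$ by $\sup_{s\in[0,\tau]}\|S(s)\|\cdot\|Du\|_{\rL^1}$. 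Hence the whole expression lies in $X$ and depends boundedly on $u$, which is precisely $\rF$-admissibility of $B$.

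The argument is uniform in the three choices of $\rF$, so no case distinction is needed, and I do not anticipate a genuine obstacle. The only points requiring care are the bookkeeping across the scale of spaces: verifying that $D$ lands in $X$, that $A_{-1}(Du)(t)$ is the correct argument for the admissibility hypothesis on $A_{-1}$, and that the $\lambda D$-term is an honest integral in $X$ rather than merely in $X_{-1}$. The mild subtlety is that the identity $B=\lambda D-A_{-1}D$ and the resulting splitting are first established in $X_{-1}$ and only afterwards recognized to be $X$-valued; once this factorization is in hand, the remainder is routine.
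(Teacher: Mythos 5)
Your proposal is correct and takes essentially the same route as the paper: both reduce the admissibility of an arbitrary $B\in\mathcal{L}(U,X_{-1})$ to that of $A_{-1}$ by a resolvent factorization, the paper writing $Bu(s)=A_{-1}\tilde{u}(s)$ with $\tilde{u}=A_{-1}^{-1}Bu$ after assuming without loss of generality that $0\in\rho(A)$, while you keep a general $\lambda\in\rho(A)$ via $B=\lambda D-A_{-1}D$ with $D=R(\lambda,A_{-1})B$ and absorb the harmless extra term $\lambda\int_0^{\tau}S(\tau-t)(Du)(t)\,\dd t$, which is an $X$-valued Bochner integral. The difference is purely cosmetic---your explicit splitting is exactly the bookkeeping that justifies the paper's ``without loss of generality'' step---so the two arguments coincide in substance.
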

\begin{proof}
Without loss of generality assume that $0\in \rho(A)$ and fix $t>0$. Note that the mapping $f\mapsto A_{-1}f(\cdot)$ is an isomorphism from $\rL^{\infty}((0,t),X)$ to $\rL^{\infty}((0,t),X_{-1})$. Assume that $A_{-1}$ is $\rL^{\infty}$-admissible and consider $B\in\cL(U,X_{-1})$ for some Banach space $U$. Since for any $u\in \rL^{\infty}((0,t),U)$ it holds that $Bu(s)=A_{-1}\tilde{u}(s)$ where $\tilde{u}=A_{-1}^{-1}Bu\in \rL^{\infty}((0,t),X)$, we conclude that $B$ is $\rL^{\infty}$-admissible. The converse is clear since $A_{-1}\in \cL(X,X_{-1})$. 

\end{proof}
Note that the variant of Proposition \ref{prop:Adm} for $\rF=\rL^{p}$, $p<\infty$, is trivial, since, by H\"older's inequality, Proposition \ref{Aboundedregu} implies that $A_{-1}$ is $\rL^{p}$-admissible if and only if $A$ is bounded.

\section{Maximal regularity and admissible generators}
\label{Sec:MaxReg}
{

The following two propositions show that  maximal regularity  and admissibility with respect to continuous, regulated functions and $\rL^{1}$-functions are closely related.

\begin{proposition}\label{propMR1a}
 Let $\cS$ be a strongly continuous semigroup on a Banach space $X$ with generator $A$. Then the following assertions hold:
 \begin{enumerate}
 \item\label{propMR1} $\cS$ satisfies  ${\rL}^{1}$-maximal-regularity if and only if $A$ is $\rL^{1}$-admissible. 
 \item\label{propMR2} $A_{-1}$ is $\rL^{p}$-admissible for some $p\in(1,\infty)$ if and only if $A\in\mathcal{L}(X)$.
\end{enumerate}
\end{proposition}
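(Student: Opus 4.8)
The plan is to treat the two parts separately, reducing (\ref{propMR1}) to a convolution/Fubini estimate relating the observation map to the maximal-regularity operator, and (\ref{propMR2}) to the zero-class criterion of Proposition \ref{Aboundedregu}.

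For (\ref{propMR1}) I would first record that $\rL^1$-maximal regularity is equivalent to boundedness of the solution map $\mathcal{M}\colon f\mapsto A(\cS\ast f)$ on $\rL^{1}([0,\tau],X)$; this follows from the closedness of $A$ (together with boundedness of $\cS\ast\,\cdot$ on $\rL^1$) via the closed graph theorem. The observation linking the two properties is that, for $x\in X$, the bounded extension of $\Psi_\tau x=AS(\cdot)x$ coincides with $t\mapsto A_{-1}S_{-1}(t)x$, so that $\rL^1$-admissibility of $A$ reads $\|A_{-1}S_{-1}(\cdot)x\|_{\rL^1([0,\tau],X)}\le c\|x\|$ for all $x\in X$. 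For the implication ``$A$ admissible $\Rightarrow$ maximal regularity'' I would write $A_{-1}(\cS\ast f)(t)=\int_0^t A_{-1}S_{-1}(t-s)f(s)\,\dd s$ in $X_{-1}$ and estimate, by Tonelli and the substitution $r=t-s$,
\[
\int_0^\tau\!\!\int_0^t\|A_{-1}S_{-1}(t-s)f(s)\|_X\,\dd s\,\dd t
=\int_0^\tau\|A_{-1}S_{-1}(\cdot)f(s)\|_{\rL^1([0,\tau-s],X)}\,\dd s
\le c\,\|f\|_{\rL^1}.
\]
Finiteness of the inner integral for a.e.\ $t$ shows that the $X_{-1}$-valued Bochner integral actually converges in $X$, i.e.\ $(\cS\ast f)(t)\in D(A)$, and the displayed bound gives $A(\cS\ast f)\in\rL^1$ with $\|\mathcal{M}\|\le c$.

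For the converse I would test $\mathcal{M}$ against approximate point masses $f_\varepsilon=\varepsilon^{-1}\mathbf{1}_{[0,\varepsilon]}x$ with $x\in D(A)$: here $\|f_\varepsilon\|_{\rL^1}=\|x\|$, and using $AS(r)x=S(r)Ax$ and its continuity, $A(\cS\ast f_\varepsilon)(t)=\varepsilon^{-1}\int_{(t-\varepsilon)_+}^{t}S(r)Ax\,\dd r\to AS(t)x$ in $X$ for each $t>0$ as $\varepsilon\to0^+$. Fatou's lemma then yields $\|AS(\cdot)x\|_{\rL^1}\le\liminf_{\varepsilon\to0^+}\|A(\cS\ast f_\varepsilon)\|_{\rL^1}\le\|\mathcal{M}\|\,\|x\|$ for $x\in D(A)$, which is exactly $\rL^1$-admissibility of $A$. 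The main technical obstacle is the bookkeeping between $X$ and $X_{-1}$ and the measurability of $s\mapsto A_{-1}S_{-1}(t-s)f(s)$; I would handle these by first proving the identities for $D(A)$-valued simple functions $f$ and then passing to the limit, using the admissibility bound as the uniform control.

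For (\ref{propMR2}) the direction $A\in\mathcal{L}(X)\Rightarrow A_{-1}$ $\rL^p$-admissible is immediate, since then $X_{-1}=X$, $A_{-1}=A$, and $\|\Phi_\tau u\|\le \sup_{t\in[0,\tau]}\|S(t)\|\,\|A\|\,\|u\|_{\rL^1}$, which is dominated by the $\rL^p$-norm via H\"older. For the converse I would exploit that the control-operator norms $\tau\mapsto\|\Phi_\tau\|_{\cL(\rL^p([0,\tau],X),X)}$ are non-decreasing (seen by extending $u$ by zero on the left and translating in time), hence remain bounded as $\tau\to0^+$. Combining this with the H\"older embedding $\|u\|_{\rL^p([0,\tau],X)}\le\tau^{1/p}\|u\|_{\rC([0,\tau],X)}$ gives
\[
\|\Phi_\tau\|_{\cL(\rC([0,\tau],X),X)}\le\tau^{1/p}\,\|\Phi_\tau\|_{\cL(\rL^p([0,\tau],X),X)}\xrightarrow[\tau\to0^+]{}0,
\]
since $p<\infty$. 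Thus $A_{-1}$ is zero-class $\rC$-admissible, and the second bullet of Proposition \ref{Aboundedregu} forces $A$ to be bounded. The only point requiring care here is the monotonicity of $\|\Phi_\tau\|$, which is a routine translation argument.
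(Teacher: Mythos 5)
Your proposal is correct, and it diverges from the paper mainly in part (\ref{propMR1}). For that part the paper gives no argument of its own: it simply cites \cite[Theorem 3.6]{KaltPort08}, where the equivalence is proved (without the admissibility language) by rescaling the semigroup. Your direct proof---the Tonelli/substitution estimate
\[
\int_0^\tau\!\!\int_0^t\|A_{-1}S_{-1}(t-s)f(s)\|_X\,\dd s\,\ddt\le c\,\|f\|_{\rL^1}
\]
for one direction, and for the converse testing the (closed-graph-bounded) solution map against the approximate identities $f_\varepsilon=\varepsilon^{-1}\mathbf{1}_{[0,\varepsilon]}x$ with $x\in D(A)$ together with Fatou---is a sound, self-contained alternative. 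The only genuinely delicate points are the ones you flag yourself: identifying the bounded extension $\Psi_\tau x$ with $t\mapsto A_{-1}S_{-1}(t)x$ a.e.\ for $x\in X$, and the joint measurability needed for Tonelli; your plan of first working with $D(A)$-valued simple functions and then using the admissibility bound and the closedness of $A$ to identify the $\rL^1$-limit does close both (note that $(\cS\ast f)(t)\in D(A)$ then follows because $A_{-1}(\cS\ast f)(t)$, computed in $X_{-1}$, is exhibited as an $X$-valued Bochner integral, and $D(A)=\{x\in X\colon A_{-1}x\in X\}$). What the citation buys is brevity; what your argument buys is that one sees explicitly how $\rL^1$-admissibility of $A$ as an observation operator and $\rL^1$-maximal regularity are two readings of the same kernel $A_{-1}S_{-1}(t-s)$. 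Part (\ref{propMR2}) is essentially the paper's own proof (``immediate from Proposition \ref{Aboundedregu} and H\"older's inequality''); your contribution there is to make explicit the monotonicity of $\tau\mapsto\|\Phi_\tau\|_{\cL(\rL^p([0,\tau],X),X)}$ via zero-padding and translation, which is precisely the detail the paper leaves implicit and which is needed to conclude zero-class $\rC$-admissibility from the factor $\tau^{1/p}$ as $\tau\to0^{+}$.
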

\begin{proof}
The first assertion was proved in \cite[Theorem 3.6]{KaltPort08} (without explicitly using the notion of admissible operators) by rescaling the semigroup.
Assertion \eqref{propMR2} is immediate from Proposition \ref{Aboundedregu} and H\"older's inequality.
\end{proof}

Note that by Guerre-Delabri{\`e}re's result it holds that if $\cS$ satisfies  ${\rL}^{1}$-maximal-regularity and $A$ is unbounded, then $X$ must contain a complemented copy of $\ell^{1}$, see \cite{GuerDela95} and \cite{KaltPort08}. 

\begin{proposition}\label{propMR}
	Let $\cS$ be a strongly continuous semigroup on a Banach space $X$ with generator $A$. Then the following assertions are equivalent:
	\begin{enumerate}
		\item \label{propMR11}$\cS$ satisfies ${\rC}$-maximal-regularity,
		\item \label{propMR12} $A_{-1}$ is $\rC$-admissible,
		\item \label{propMR13} $\cS$ is of bounded semivariation, i.e.\ \eqref{eq:SV} holds for all $\tau>0$,
		\item  \label{propMR14} $\cS$ satisfies ${\rReg}$-maximal-regularity,
		\item  \label{propMR15} $A_{-1}$ is $\rReg$-admissible.	
	\end{enumerate}
\end{proposition}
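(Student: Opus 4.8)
The plan is to establish the equivalence of the five assertions by proving a cycle of implications, exploiting the symmetry between the continuous and regulated function settings. Since $\rC([0,\tau],X)\subset\rReg([0,\tau],X)$ is a subspace (with the same supremum norm), the implications $(\ref{propMR14})\Rightarrow(\ref{propMR11})$ and $(\ref{propMR15})\Rightarrow(\ref{propMR12})$ are immediate by restriction, so the real content is upgrading from continuous to regulated functions. I would first dispense with the pairs that merely restate the definition of admissibility through maximal regularity: the equivalence $(\ref{propMR11})\Leftrightarrow(\ref{propMR12})$ and $(\ref{propMR14})\Leftrightarrow(\ref{propMR15})$ should follow by unwinding the definitions exactly as in the $\rC$-case remark already recorded in the excerpt, using that $A(\cS\ast f)(\tau)=\Phi_\tau f$ in $X_{-1}$ together with a translation/semigroup argument to pass between the pointwise membership $(\cS\ast f)(t)\in D(A)$ and boundedness of $\Phi_\tau$.

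The heart of the matter is the equivalence with bounded semivariation, assertion $(\ref{propMR13})$, and I would route the cycle through it. Following Travis, I would show $(\ref{propMR12})\Rightarrow(\ref{propMR13})$ by testing $\Phi_\tau$ on piecewise-constant (step) functions: for a partition $0=t_1<\dots<t_n=\tau$ and vectors $x_i$ with $\|x_i\|\le1$, the function $u=\sum_i x_i\mathbf{1}_{[t_i,t_{i+1})}$ lies in $\rReg$ with $\|u\|_\infty\le1$, and a direct computation of $\int_0^\tau S_{-1}(\tau-s)A_{-1}u(s)\,\dd s$ telescopes into a sum of the form $\sum_i(S(\tau-t_i)-S(\tau-t_{i+1}))x_i$; after reindexing this is exactly the quantity controlled in \eqref{eq:SV}, so boundedness of $\Phi_\tau$ on $\rReg$ yields $\operatorname{var}_0^\tau(\cS)<\infty$. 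For the return direction $(\ref{propMR13})\Rightarrow(\ref{propMR15})$, I would use that step functions are dense in $\rReg([0,\tau],X)$ in the supremum norm, so the semivariation bound gives a uniform estimate for $\Phi_\tau$ on the dense subspace, which then extends to all of $\rReg$ by continuity; this simultaneously establishes the strongest admissibility statement and closes the loop back to continuous functions.

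The main obstacle I anticipate is the careful justification of the integral computation in $X_{-1}$: one must verify that $\int_0^\tau S_{-1}(\tau-s)A_{-1}u(s)\,\dd s$ actually lands in $X$ and equals the telescoping sum in $X$ rather than merely in $X_{-1}$, and that the identity $\int_{t_i}^{t_{i+1}}S_{-1}(\tau-s)A_{-1}x_i\,\dd s=(S(\tau-t_i)-S(\tau-t_{i+1}))x_i$ is legitimate despite $x_i$ possibly lying outside $D(A)$. This is resolved by recalling that $A_{-1}S_{-1}(\cdot)$ is, up to the sign, the time-derivative of $S_{-1}(\cdot)$, so the integral of $S_{-1}(\tau-s)A_{-1}x_i$ over $[t_i,t_{i+1}]$ computes as a difference of semigroup values by the fundamental theorem of calculus in $X_{-1}$, and the resulting differences $(S(\tau-t_i)-S(\tau-t_{i+1}))x_i$ genuinely lie in $X$. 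A secondary technical point is the density of step functions in $\rReg$, which is standard but should be invoked explicitly to transfer the uniform bound; once these two points are secured the equivalences follow in a clean cycle $(\ref{propMR11})\Leftrightarrow(\ref{propMR12})\Rightarrow(\ref{propMR13})\Rightarrow(\ref{propMR15})\Rightarrow(\ref{propMR12})$ together with $(\ref{propMR14})\Leftrightarrow(\ref{propMR15})$.
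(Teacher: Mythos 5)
There is a genuine gap, and it sits exactly where the proposition is hardest. Your step ``\eqref{propMR12}$\Rightarrow$\eqref{propMR13} by testing $\Phi_\tau$ on step functions'' is not available: assertion \eqref{propMR12} only gives boundedness of $\Phi_\tau$ on $\rC([0,\tau],X)$, and a step function such as $u=\sum_i x_i\chi_{[t_i,t_{i+1})}$ is \emph{not} a uniform limit of continuous functions, so there is nothing to test it against. Your computation actually proves \eqref{propMR15}$\Rightarrow$\eqref{propMR13}, not \eqref{propMR12}$\Rightarrow$\eqref{propMR13}. Nor can the step be repaired by approximating $u$ in a weaker norm (say $\rL^1$): that only yields convergence of $\Phi_\tau f_\varepsilon$ in $X_{-1}$, and a bounded ball of $X$ need not be closed in $X_{-1}$ --- that closedness is essentially the Favard condition $Fav(\cS)=D(A)$, which by Theorem \ref{thm:Fav} and Example \ref{ex:Kato} fails precisely in the interesting cases (Kato's semigroup is $\rC$-admissible with $Fav(\cS)\neq D(A)$). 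With this arrow removed, \eqref{propMR11} and \eqref{propMR12} become sinks in your diagram and the five-fold equivalence is lost. The paper does not reprove this implication either: it cites Travis \cite{Travis} for \eqref{propMR11}$\Leftrightarrow$\eqref{propMR12}$\Leftrightarrow$\eqref{propMR13}, and any self-contained proof would have to reproduce Travis's argument rather than a direct testing procedure.

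A second, related gap: the directions \eqref{propMR12}$\Rightarrow$\eqref{propMR11} and \eqref{propMR15}$\Rightarrow$\eqref{propMR14} are not ``unwinding the definitions.'' Admissibility gives $A(\cS\ast f)(t)\in X$ for each $t$ together with the uniform bound $\|A(\cS\ast f)(t)\|\leq K_\tau\|f\|_\infty$, but maximal regularity demands that $t\mapsto A(\cS\ast f)(t)$ lie in $\rC$ (resp.\ $\rReg$), and boundedness is far from that. The natural translation argument you allude to gives
\[ A(\cS\ast f)(t+h)=S(h)A(\cS\ast f)(t)+\Phi_h\bigl(f(t+\cdot)\bigr), \]
and the remainder is only controlled by $\|\Phi_h\|\,\|f\|_\infty$, which closes the estimate only under \emph{zero-class} admissibility; Example \ref{ex:Kato} shows $\|\Phi_h\|\not\to 0$ can occur even when the proposition's conclusions all hold, so this route cannot work as stated. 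The paper outsources exactly these directions, to \cite{Travis} for the $\rC$-case and to \cite[Theorem 4.3.1]{S} for $\rReg$ (which even yields continuity of $A(\cS\ast f)$ for regulated $f$). On the positive side, your $(\ref{propMR13})\Rightarrow(\ref{propMR15})$ argument is correct and is essentially the paper's own proof --- uniform approximation by step functions, the telescoping identity $\int_{t_i}^{t_{i+1}}S_{-1}(\tau-s)A_{-1}x_i\,\dd s=(S(\tau-t_i)-S(\tau-t_{i+1}))x_i$ justified by the fundamental theorem of calculus in $X_{-1}$, the semivariation bound making the sums Cauchy in $X$, and closedness of $A$ (equivalently, uniqueness of limits in $X_{-1}$) --- and the two technical caveats you flag there are precisely the right ones.
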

\begin{proof}
The equivalences \eqref{propMR11}$\Leftrightarrow$\eqref{propMR2}$\Leftrightarrow$\eqref{propMR13} are shown in \cite[Lemma 3.1 and Proposition 3.1]{Travis}.  The implication \eqref{propMR14}$\Rightarrow$\eqref{propMR15} is easy to see from the definitions and \eqref{propMR15}$\Rightarrow$\eqref{propMR14} is a consequence of \cite[Theorem 4.3.1]{S}, which even implies that $Ax$ is continuous for any $f\in\rReg([0,\tau],X)$, where $x$ is the mild solution to equation \eqref{acp}. Since \eqref{propMR15} trivially implies \eqref{propMR12}, it follows that \eqref{propMR15}$\Rightarrow$\eqref{propMR13}.\newline It remains to show that \eqref{propMR13}$\Rightarrow$\eqref{propMR15}.
Let $f\colon [0,\tau]\to X$, $\tau>0$, be an arbitrary regulated function and suppose that $\cS$ is of bounded semivariation on $[0,\tau]$.
Then there exists a sequence of step functions $(f_n)_{n\in\NN}$ represented by
\[ f_n(s):=\sum_{i=1}^{n}f(d_i^n)\chi_{ (d_{i-1}^n,d_i^n)}(s),\qquad s\in [0,\tau], \]
which converges uniformly to $f$ and where $0=d_{0}^n<d_{1}^n<\dots<d_{n}^n=\tau$. Define $g_n(s)=S(\tau-s)f(d_i^n)$ for $d_{i-1}^n<s\leq d_i^n$, $i=1,\dots,n$, and $g_n(0)=S(\tau)f(0)$. Because $f$ is bounded and $\cS$ is strongly continuous, we have that $(g_n)_{n\in\NN}$ is uniformly bounded and
 converges uniformly to $s\mapsto S(\tau-s)f(s)$ for $n\to\infty$. Therefore, 
\[ \lim\limits_{n\to\infty}\int_0^\tau g_n(s)\dd s=\int_0^\tau S(\tau-s)f(s)\dd s. \]
Because $\int_0^\tau g_n(s)\dd s\in D(A)$ we can calculate,
\begin{align*}
A\int_0^\tau g_n(s)\dd s={}&A\sum_{i=1}^{n}\int_{d_{i-1}^n}^{d_i^n}g_n(s)\dd s
\\={}&\sum_{i=1}^{n} \left[ S(\tau-d_{i}^n)-S(\tau-d_{i-1}^n) \right]f_{n}(d_i^n)\\=:{}&h_n.
\end{align*}
Since $\cS$ is of bounded semivariation, we have for $n,m\in\NN$ that
\[ \|h_n-h_m\|_X\leq \operatorname{var}_0^\tau(\cS) \|f_n-f_m\|_{\infty},\]
where  $\operatorname{var}_0^\tau(\cS)$ is defined as in \eqref{eq:SV}.  
Since $(f_n)_{n\in\NN}$ is a Cauchy sequence, it follows that $(h_n)_{n\in\NN}$ is a Cauchy sequence of $X$ and thus converges to a limit in $X$. 
Hence, as $A$ is closed,
\[ \int_0^\tau S(\tau-s)f(s)\dd s\in D(A), \]
and therefore,
\[ \int_0^\tau S_{-1}(\tau-s)A_{-1}f(s)\dd s=A\int_0^\tau S(\tau-s)f(s)\dd s\in X. \qedhere\]
\end{proof}

 The following example has been used in the context of $\rC$-maximal regularity several times, \cite{Bail80,EG}, and seems to go back T.~Kato\footnote{see A.~Pazy's review of \cite{Bail80}, MR0577152, on MathSciNet}. We use it to show that an analogous statement as Proposition \ref{propMR} for $\rL^{\infty}$ does not hold. 
\begin{example}\label{ex:Kato}
	Let $X=c_0(\NN)$ and $Ax=\sum_{n=1}^{\infty}-nx_ne_n$ with $D(A)=\{x\in X \, :\, \sum_{n=1}^{\infty}-nx_ne_n\in c_0(\NN)\}$ and where $(e_{n})_{n\in\mathbb{N}}$ refers to the canonical basis. 
	It is easy to see that $A$ generates an exponentially stable strongly continuous semigroup $\cS=(S(t))_{t\geq 0}$ given by 
	\[S(t)x=\sum_{n=1}^{\infty}e^{-nt}x_ne_n \]
	(see e.g. \cite[Example 4.7 iii), Chapter I]{EN}). 
	Let now $B=-A_{-1}\in \cL(X,X_{-1})$.
	Define $u\in \rL^{\infty}([0,\tau],c_0(\NN))$ by $u(s)=\sum_{n=1}^{\infty}(u(s))_ne_n$, where 
	\[
	(u(s))_n:=\begin{cases} 1 & \mbox{if } s\in[\tau-\frac{1}{n},\tau-\frac{1}{2n}] \mbox{ and }\frac{1}{n}<\tau, \\
	0 & \mbox{ otherwise.}\end{cases}
	\]
	The element $f=\int_0^{\tau}S_{-1}(\tau-s)Bu(s)\dd s=\int_0^{\tau}S_{-1}(s)Bu(\tau-s)\dd s$ in $X_{-1}$ can be represented by a sequence $(f_n)_n$ and we can calculate for all $n\in\NN$ with $\frac{1}{n}<\tau$,
	\[
	f_n=\int_0^{\tau}e^{-ns}n(u(\tau-s))_n\dd s=\int_{\frac{1}{2n}}^{\frac 1n}ne^{-ns}\dd s
	=-(e^{-1}-e^{-\frac 12})>\frac 15.
	\]
	This shows $f\notin c_0(\NN)$ and therefore $B$ is not $\rL^{\infty}$-admissible.
	On the other hand, it is easy to see that $\cS$ satisfies $\rF$-maximal-regularity for $\rF=\rC$, $\rReg$ and $\rL^\infty$, see e.g.\ \cite{EG}, and thus $B=-A_{-1}$ is $\rReg$-admissible  and hence $\rC$-admissible by Proposition \ref{propMR}.
	Since $A$ is obviously unbounded, Proposition \ref{Aboundedregu} shows that $B$ is not zero-class $\rF$-admissible with $\rF$ equal to $\rReg$ or $\rC$.
		\end{example}

	\rem{
		\medskip
		
		Now define $\tilde{B}x=\sum_{n=1}^{\infty}\frac{n}{\log n}x_ne_n$ which is again a positive operator in $\cL(U,X_{-1})$. Let $\tilde{f}=\int_0^{\tau}S_{-1}(\tau-s)\tilde{B}u(s)\dd s=\int_0^{\tau}S_{-1}(s)\tilde{B}u(\tau-s)\dd s$ and we calculate
		\[
		|\tilde{f}_ne_n|=\left|\int_0^{\tau}e^{-ns}n(u(\tau-s))_n\dd s\right|\leq \|u\|_{\infty}\left|\int_{0}^{1}\frac{n}{\log n}e^{-ns}\dd s\right| \]
		\[ =-\frac{1}{\log n}(e^{-\tau}-1)=\frac{1}{\log n}(1-e^{-\tau}).
		\]
		Therefore $f\in c_0(\NN)$ and we see that $\tilde{B}$ is a zero-class $\rL^{\infty}$-admissible operator, but not $\rL^p$-admissible for all $p\in[0,\infty[$ since
		\[
		\|\tilde{f}\|^p_l
		\]}

\begin{remark}
It seems to have been unnoticed in the literature that the simple Example \ref{ex:Kato} answers an old question posed by G.~Weiss in \cite[Remark 6.10]{Weiss89obs} about whether  $\rL^{1}$-admissibility of $B^*$ with respect to the dual semigroup $\cS^*$ implies that $B$ is $\rL^{\infty}$-admissible for $\cS$ for general $B\in \mathcal{L}(U,X_{-1})$ and where $B^{*}$ is to be understood as an admissible observation operator. In the setting of the example, the dual semigroup on $X^{*}=\ell^{1}(\mathbb{N})$ is given by $S^{*}(t)x=\sum_{n}\mathrm{e}^{-nt}x_{n}e_{n}$ for any $x=\sum_{n}x_{n}e_{n}$ and where $(e_{n})_{n\in\mathbb{N}}$ (again) refers to the canonical basis of $\ell^{1}$. It is easy to check that $A^{*}=(A_{-1})^{*}$ is an $\rL^{1}$-admissible observation operator for $\cS^{*}$. On the other hand, $B=A_{-1}$ is not $\rL^{\infty}$-admissible for $\cS$ as shown above. 
Furthermore, the example shows that $\rReg$-admissibility does not imply zero-class $\rReg$ admissibility. Therefore, the assumption of ``zero class admissibility'' in  Proposition \ref{Aboundedregu} cannot be relaxed to plain ``admissibility''. On the other hand, the existence of a $\rReg$-admissible operator $B$ which is not zero-class $\rReg$-admissible also establishes a counterexample in the context of \emph{input-to-state stability} for infinite-dimensional linear systems, see e.g.\ \cite{MiroPrie19,JacNabPartSchw}: By \cite[Proposition 2.10]{JacNabPartSchw}, it shows that there exists a system $(A,B)$ which is input-to-state stable, but not integral input-to-state stable both with respect to respect to $\rReg$-input functions.
\end{remark}
In order to proceed to our main result, we need to discuss Baillon's theorem on maximal regularity with respect to $\rC$ in more detail. 
The following proposition was derived within the proof of Baillon's theorem \cite{Bail80}, see also \cite{EG}. Since we need it formulated explicitly, we sketch a short argument based on a classical characterization of spaces containing $c_{0}$ due to Bessaga--Pe\l cy\'nski \cite{BessPelc58}, see also \cite{EG}.
\begin{proposition}[Baillon's theorem and Baillon spaces]\label{thm:Baillon}
Let $A$ generate  a strongly continuous semigroup $\cS$ on a Banach space $X$. If $\cS$ satisfies  {\rm C}-maximal-regularity and $A$ is unbounded, then $X$ contains an isomorphic copy of $c_{0}$. More precisely, there exists a  sequence $(z_{n})_{n\in\mathbb{N}}$ in $X$ with the following properties:
\begin{enumerate}
\item $Z:=\overline{\mathrm{span}}(z_{n})_{n\in\mathbb{N}}$ is isomorphic to $c_{0}$, \label{it22}
\item $(z_{n})_{n\in\mathbb{N}}$ is a Schauder basis of $Z$, \label{it21}
\item $0<\inf_{n}\|z_{n}\|\leq\sup_{n}\|z_{n}\|<\infty$, \label{it23}
\item $\lim_{n\to\infty}R(\lambda,A)z_{n}=0$ for any $\lambda\in \rho(A)$. \label{it24}
\end{enumerate}
We call such a space $Z$ a \emph{Baillon space}.
\end{proposition}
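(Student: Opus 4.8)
The plan is to manufacture, from the two hypotheses, a weakly unconditionally Cauchy series in $X$ whose terms stay bounded away from $0$, and then to invoke the Bessaga--Pe\l czy\'nski $c_{0}$-theorem \cite{BessPelc58} to extract the desired sequence. By Proposition \ref{propMR}, $\rC$-maximal regularity is equivalent to $\cS$ being of bounded semivariation, so $V:=\operatorname{var}_0^\tau(\cS)<\infty$. The basic observation is that, for any pairwise disjoint subintervals $[a_k,b_k]\subset(0,\tau)$ and unit vectors $x_k$, applying bounded semivariation to the partition formed by the endpoints $\{a_k,b_k\}$ (and assigning the vector $0$ on the gap intervals) gives $\bigl\|\sum_{k\in F}\varepsilon_k (S(a_k)-S(b_k))x_k\bigr\|\le V$ for every finite set $F$ and all scalars $|\varepsilon_k|\le1$; hence the series $\sum_k z_k$ with $z_k:=(S(a_k)-S(b_k))x_k$ is weakly unconditionally Cauchy with constant $V$.

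It remains to arrange the data so that $\inf_k\|z_k\|>0$ while $b_k\to0$. Here I would use that, since $A$ is unbounded, $\cS$ is not uniformly continuous, so $c:=\limsup_{t\to0^+}\|S(t)-I\|>0$; set $\delta:=c/4$. Recursively, given the intervals chosen so far, pick $\sigma_k>0$ arbitrarily small with $\|S(\sigma_k)-I\|>2\delta$ together with a unit vector $x_k$ satisfying $\|v_k\|>2\delta$ for $v_k:=(I-S(\sigma_k))x_k$; then, using strong continuity of $\cS$ at $0$ for the \emph{fixed} vector $v_k$, choose $a_k>0$ so small that $\|S(a_k)v_k-v_k\|<\delta$ and that $[a_k,b_k]$, with $b_k:=a_k+\sigma_k$, lies below all previously chosen intervals. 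This produces disjoint intervals with $b_k\to0$ and
\[
\|z_k\|=\|S(a_k)v_k\|\ge\|v_k\|-\|S(a_k)v_k-v_k\|>\delta ,
\]
while $\|z_k\|\le M(1+M)$ with $M:=\sup_{t\in[0,\tau]}\|S(t)\|<\infty$.

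Since $\sum_k z_k$ is weakly unconditionally Cauchy while $\|z_k\|\ge\delta$ rules out unconditional convergence, the Bessaga--Pe\l czy\'nski theorem yields a subsequence, which after relabelling I denote $(z_n)_{n}$, that is equivalent to the unit vector basis of $c_{0}$. Its closed span $Z$ is then isomorphic to $c_{0}$ and $(z_n)_n$ is a Schauder basis of $Z$, which gives \eqref{it22} and \eqref{it21}, and \eqref{it23} is immediate from the two-sided bound $\delta<\|z_k\|\le M(1+M)$. For \eqref{it24} I would use that $R(\lambda,A)$ commutes with $\cS$ and that $t\mapsto R(\lambda,A)S(t)x$ has derivative $S(t)(\lambda R(\lambda,A)-I)x$, so that
\[
R(\lambda,A)z_k=-\int_{a_k}^{b_k}S(t)\bigl(\lambda R(\lambda,A)-I\bigr)x_k\,\dd t,\qquad \|R(\lambda,A)z_k\|\le (b_k-a_k)\,C_\lambda ,
\]
with $C_\lambda:=M\,\|\lambda R(\lambda,A)-I\|<\infty$; since $b_k-a_k=\sigma_k\to0$ along the subsequence, $R(\lambda,A)z_n\to0$, first for one $\lambda$ and then for every $\lambda\in\rho(A)$ via the resolvent identity.

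The main obstacle is the tension between \eqref{it23} and \eqref{it24}: the vectors $z_k$ must remain bounded away from $0$ (which is exactly where unboundedness of $A$ enters, through the ``rough'' increments $I-S(\sigma_k)$) yet be annihilated by the resolvent in the limit (which forces the increments to sit on vanishingly short time intervals). Reconciling these via strong continuity—converting the lower bound on $\|(I-S(\sigma_k))x_k\|$ into a lower bound on $\|S(a_k)v_k\|$—is the delicate bookkeeping, as is ensuring the intervals can simultaneously be made disjoint and shrinking so that the semivariation estimate keeps delivering weak unconditional Cauchyness.
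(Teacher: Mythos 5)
Your proof is correct, but it takes a genuinely different route from the paper. The paper follows Baillon \cite{Bail80} and Eberhardt--Greiner \cite{EG}: it uses the $\rC$-maximal-regularity estimate to choose $x_{n}=t_{n}AS(t_{n})y_{n}$ with $\|y_{n}\|=1$ and $\sum_{n}t_{n}<\infty$, so that $\inf_{n}\|x_{n}\|>0$ while all subsums $\sum_{j}x_{n_{j}}$ (with coefficients equal to $1$ only) stay bounded; since this is weaker than weak unconditional Cauchyness, it must then pass to a \emph{block basis} $z_{k}=\sum_{n=p_{k}+1}^{p_{k+1}}s_{n}x_{n}$ via \cite[Cor.~1 and Lemma 3]{BessPelc58}, with bounded coefficients $(s_{n})$, and property \eqref{it24} comes from the summability $\sum_{n}t_{n}<\infty$ pushed through the blocks. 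You instead first pass to bounded semivariation via the Travis equivalence (Proposition \ref{propMR}, which the paper proves independently of Proposition \ref{thm:Baillon}, so there is no circularity), and your key observation---that inserting disjoint intervals with zero vectors on the gaps into the partition in \eqref{eq:SV} yields $\bigl\|\sum_{k\in F}\varepsilon_{k}(S(a_{k})-S(b_{k}))x_{k}\bigr\|\leq \operatorname{var}_{0}^{\tau}(\cS)$ for all $|\varepsilon_{k}|\leq 1$---gives the full scalar wuC bound outright. This is strictly stronger information than the paper's coefficient-free subsum bound, and it is what lets you extract a mere \emph{subsequence} (seminormalized, weakly null by wuC, hence with a basic subsequence that the wuC constant upgrades to $c_{0}$-equivalence) rather than a block basis; it also makes \eqref{it24} trivial, since $R(\lambda,A)z_{k}=-\int_{a_{k}}^{b_{k}}S(t)(\lambda R(\lambda,A)-I)x_{k}\,\dd t$ has norm $O(\sigma_{k})$, with no summability condition needed. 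Your lower bound $\|z_{k}\|=\|S(a_{k})v_{k}\|>\delta$, obtained from $\limsup_{t\to0^{+}}\|S(t)-I\|>0$ (unboundedness of $A$) plus strong continuity on the fixed vector $v_{k}$, is sound. Two cosmetic points: you should enforce, say, $\sigma_{k}<1/k$ explicitly, since ``lying below the previous intervals'' alone only makes $(b_{k})$ decreasing, and what \eqref{it24} actually uses is $\sigma_{k}\to 0$; and the ``subsequence'' form of the Bessaga--Pe\l czy\'nski theorem you invoke is exactly the combination of the selection principle with the wuC upper bound (as in \cite[Lemma D.2 and Theorem D.3]{AreBatHieNeu}), which holds here precisely because your $z_{k}$ are already bounded below in norm. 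On balance, your argument is somewhat more self-contained on the Banach-space side and localizes the role of maximal regularity entirely in the semivariation constant, while the paper's route stays closer to Baillon's original quantitative construction.
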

\begin{proof}
To show the existence of a sequence $(z_{n})_{n\in\mathbb{N}}$ satisfying (\ref{it21})--(\ref{it23}), it suffices to find a sequence $(x_{n})_{n\in\mathbb{N}}$ and a constant $M>0$ such that \[ \inf_{n}\|x_{n}\|>0 \quad\text{ and } \quad \left\|\sum_{j=0}^m x_{n_{j}}\right\|\le M, \qquad m\in \mathbb{N},\]
for any increasing sequence $(n_{j})_{j}$ of positive integers, see \cite[Theorem 0.1]{EG}. Following \cite{Bail80} or \cite{EG}---using that $A$ is unbounded and  that $\cS$ satisfies $\rC$-maximal-regularity---a possible choice is given by $x_{n}=t_{n}AS(t_{n})y_{n}$ with suitably chosen sequences $(t_{n})_{n\in\mathbb{N}}$ \fs{in $(0,1)$ and $(y_{n})_{n\in\mathbb{N}}$ in $X$ with $\sum_{n=1}^{\infty}t_{n}<\infty$} and $\|y_{n}\|=1$, $n\in\mathbb{N}$. By \cite[Cor.~1 and Lemma 3]{BessPelc58}, see also \cite[Lemma D.2 and Theorem D.3]{AreBatHieNeu}, the sequence $(z_{n})_{n\in\mathbb{N}}$ can then be derived as a block basis from $(x_{n})_{n\in\mathbb{N}}$, i.e.\ there exists an increasing sequence of positive integers $(p_{k})_{k\in\mathbb{N}}$ and a sequence of positive numbers $(s_{k})_{k\in\mathbb{N}}$ such that
\[z_{k}=\sum_{n=p_{k}+1}^{p_{k+1}}s_{n}x_{n},\qquad k\in \mathbb{N},\]
satisfies (\ref{it21})--(\ref{it23}). By the proof of Lemma D.2 in \cite{AreBatHieNeu}, the sequence $(s_{k})_{k\in\mathbb{N}}$ can be chosen to be bounded. 
To see Assertion \eqref{it24}, fix $\lambda\in\rho(A)$ and note that
\[\fs{\|R(\lambda,A)z_{n}\| \leq \sum_{n=p_{k}+1}^{p_{k+1}}t_{n}\|(\lambda R(\lambda,A)-I)S(t_{n})y_{n}\|\longrightarrow0 \quad (n\to\infty).}\qedhere\]  
\end{proof}

In particular, Baillon's result shows that if $A_{-1}$ is $\rL^{\infty}$-admissible and $A$ is unbounded, then $X$ must contain $c_{0}$ and therefore, e.g., fails to be reflexive. This, however, does not exploit the difference between $\rL^{\infty}$-admissibility and $\rC$-admissibility. A step towards this is achieved in the following result.
\begin{theorem}\label{thm:Fav}
Let $\cS$ be a strongly continuous semigroup on a Banach space $X$. If $A_{-1}$ is $\rL^{\infty}$-admissible then 
\[Fav(\cS)=D(A),\] where $Fav(\cS):=\{x\in X\colon\limsup_{t\to0^{+}}\frac{1}{t}\|S(t)x-x\|<\infty\}$ refers to the \emph{Favard space} of $\cS$.
\end{theorem}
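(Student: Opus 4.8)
The inclusion $D(A)\subseteq Fav(\cS)$ is standard (for $x\in D(A)$ one has $\tfrac1t(S(t)x-x)=\tfrac1t\int_0^t S(s)Ax\,\dd s\to Ax$), so the whole content is the reverse inclusion $Fav(\cS)\subseteq D(A)$. The plan is to first reduce to a convergence statement. After rescaling $A\rightsquigarrow A-\omega$, which affects neither $\rL^\infty$-admissibility nor $Fav(\cS)$ up to equivalence of norms, I may assume $\cS$ bounded and $0\in\rho(A)$. Fix $x\in Fav(\cS)$ and put $y_t:=\tfrac1t(S(t)x-x)$, so that $M_x:=\sup_{0<t\le\tau}\|y_t\|<\infty$ for small $\tau$. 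Setting $z_t:=\tfrac1t\int_0^t S(s)x\,\dd s\in D(A)$ one has $z_t\to x$ and $Az_t=y_t$ as $t\to0^{+}$; since $A$ is closed, everything reduces to showing that the bounded net $(y_t)$ is \emph{norm-convergent} in $X$.

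Next I would extract from $\rL^{\infty}$-admissibility the two features separating it from mere $\rC$-admissibility. Inserting constant inputs $u\equiv v$ into $\Phi_\tau$ gives $\Phi_\tau(S(\cdot)v)=\tau A_{-1}S(\tau)v\in X$, hence $S(\tau)X\subseteq D(A)$: $\cS$ is immediately differentiable. More importantly, testing $\Phi_\tau$ against scalar step functions $u(s)=g(\tau-s)x$ with $g\in\rL^\infty([0,\tau])$ yields, writing $\eta_x(r):=S(r)x-x$,
\[\Big\|\int_0^\tau g(r)\,\dd\eta_x(r)\Big\|\le K_\tau\|g\|_\infty\|x\|,\]
so $\eta_x$ has bounded semivariation $\le K_\tau\|x\|$ on $[0,\tau]$; equivalently $r\mapsto\langle S(r)x,x^*\rangle$ is of uniformly bounded variation over $\|x^*\|\le1$, and for any partition the vectors $\sum_i c_i\big(S(t_{i+1})-S(t_i)\big)x$ lie in $X$ with $X$-norm $\le K_\tau\|x\|\max_i|c_i|$. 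It is precisely this indicator-function strength that is unavailable for $\rC$-admissibility.

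I would then locate the cluster points of $(y_t)$. Being bounded in $X\subseteq X^{**}$, the net has weak-$*$ cluster points $L\in X^{**}$, and for $x^*\in D(A^*)$ a Cesàro computation gives $\langle y_t,x^*\rangle=\tfrac1t\int_0^t\langle S(s)x,A^*x^*\rangle\,\dd s\to\langle x,A^*x^*\rangle$. Thus \emph{every} cluster point induces the same functional $x^*\mapsto\langle x,A^*x^*\rangle$ on $D(A^*)$, which by $x\in Fav(\cS)$ is bounded (by $M_x$) and extends to the sun-dual $X^{\odot}:=\overline{D(A^*)}^{\,\|\cdot\|}\subseteq X^*$, defining a single element $A^{\odot}x\in(X^{\odot})^*$. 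Since $X\hookrightarrow(X^{\odot})^*$ canonically and $D(A^*)=(X_{-1})^*$ separates $X\subseteq X_{-1}$, one has $x\in D(A)$ if and only if $A^{\odot}x$ is represented by an element of $X$. The theorem is therefore equivalent to confining this functional to $X$.

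The heart of the matter, and the step I expect to be the main obstacle, is to deduce $A^{\odot}x\in X$ from the bounded semivariation above. The subtlety is that the continuous averages $Q_\tau y_t:=\int_0^\tau S(s)y_t\,\dd s$ always converge (to $\eta_x(\tau)$) as $t\to0^{+}$, for \emph{any} semigroup and any $x\in Fav(\cS)$; this by itself is consistent with $L\notin X$, as Kato's Example~\ref{ex:Kato} on $c_0$ shows, where only $\rC$-admissibility holds, the cluster point genuinely escapes into $(X^{\odot})^*\setminus X$, and $Fav(\cS)\supsetneq D(A)$. To rule this out I would feed step functions rather than continuous ones into $\Phi_\tau$ and combine the resulting uniform $X$-norm control of $\sum_i c_i(S(t_{i+1})-S(t_i))x$ with the Favard Lipschitz bound $\|\eta_x(r)\|\le M_x r$, aiming to force $(y_t)$ to be norm-Cauchy; equivalently, to pin the weak-$*$ limit inside $X$. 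A convenient restatement of the hypothesis for this purpose is that $\int_0^\tau S(\tau-s)u(s)\,\dd s\in D(A)$ for every $u\in\rL^\infty([0,\tau],X)$, with $A$-norm $\le K_\tau\|u\|_\infty$. Turning this confinement into a rigorous argument—showing that the step-function (i.e.\ $\rL^\infty$) strength, unlike the $\rC$ strength, prevents the escape to the bidual—is exactly where the decisive difference between $\rL^\infty$ and $\rC$ enters and where the real work lies.
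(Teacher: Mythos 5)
Your reduction to norm convergence of $y_t=\tfrac1t(S(t)x-x)$ and the cluster-point analysis in $(X^{\odot})^*$ are sound, but the proposal stops exactly where the proof has to begin, and---more seriously---the two features you extract from $\rL^{\infty}$-admissibility cannot suffice to close the gap. Orbit inputs $u(s)=S(s)v$ are continuous, and \emph{finite} step functions are regulated; hence both your point (a) (immediate differentiability, indeed analyticity) and the finite-partition bound in your point (b),
\[
\Bigl\|\sum_i c_i\bigl(S(t_{i+1})-S(t_i)\bigr)x\Bigr\|\leq K_\tau\|x\|\max_i|c_i|,
\]
already follow from $\rC$-, respectively $\rReg$-, admissibility of $A_{-1}$ (Proposition \ref{propMR}: bounded semivariation is equivalent to $\rReg$-admissibility). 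Kato's Example \ref{ex:Kato} satisfies all of these estimates together with the Favard Lipschitz bound $\|\eta_x(r)\|\leq M_x r$, and yet there $Fav(\cS)\neq D(A)$; so no argument built solely from (a), (b) and the Favard bound can force $(y_t)$ to be norm-Cauchy. The genuine surplus of $\rL^{\infty}$ over $\rReg$ lies in bounded measurable inputs with \emph{infinitely many} steps whose values do \emph{not} converge in $X$ (such functions are not regulated), and your proposal never uses one---this is precisely the ``real work'' you defer, and it is the whole theorem.

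For comparison, the paper's mechanism is the following. Given $x\in Fav(\cS)$ (with $0\in\rho(A)$), choose a bounded sequence $(y_n)$ in $X$ with $A^{-1}y_n\to x$, i.e.\ $y_n\to A_{-1}x$ in $X_{-1}$ \cite[Theorem 3.2.8]{Nee}, fix $t_n\downarrow0$ with $t_0=1$, and feed into $\Phi_1$ the input $u(s)=y_n$ for $s\in[1-t_n,1-t_{n+1})$: an $\rL^{\infty}$ function which is in general not regulated, since $(y_n)$ need not converge in $X$. Admissibility yields that the telescoping series $\sum_{n}\bigl(S(t_n)y_n-S(t_{n+1})y_n\bigr)$ has sum in $X$. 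After passing to a subsequence with $\sum_n t_n^{-1}\|y_{n+1}-y_n\|_{-1}<\infty$, analyticity of $\cS$ (itself a consequence of the hypothesis, via \cite[Proposition 9]{JacSchwZw} and Proposition \ref{prop:Adm}) gives $\|S(t_{n+1})(y_n-y_{n+1})\|\leq Ct_{n+1}^{-1}\|y_n-y_{n+1}\|_{-1}$, so the correction series converges absolutely in $X$; combining the two shows that $S(t_N)y_N$ converges in the $X_{-1}$-norm with limit in $X$, and since that limit is $A_{-1}x$, one concludes $A_{-1}x\in X$, i.e.\ $x\in D(A)$. To salvage your route you would have to replace your finite-step testing by such infinite-step inputs encoding a Favard approximating sequence; as written, the proposal contains a genuine gap at its decisive step.
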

\begin{proof} Without loss of generality we may assume that $0\in\rho(A)$. 
Let $x\in Fav(\cS)$.
It is well-known, e.g.\ \cite[Theorem 3.2.8]{Nee}, that there exists a bounded sequence
$(y_{n})_{n\in\mathbb{N}}$ in $X$ such that $\lim_{n\to\infty}A^{-1}y_{n}=x$. Let $(t_{n})_{\in\mathbb{N}}$ be a strictly decreasing sequence of positive numbers with $t_{0}=1$ and $\lim_{n\to\infty}t_{n}=0$. Define $u:[0,1)\to X$ by
\[u(s)=y_{n}\qquad\text{for } s\in [1-t_{n},1-t_{n+1}), n\in\mathbb{N}_{0}.\] 
Clearly, $u\in {\rL}^{\infty}((0,1),X)$ since $(y_{n})$ is bounded. We have by assumption that
\begin{align*}
 \int_{0}^{1}S_{-1}(1-s)A_{-1}u(s)\mathrm{d}s={}&\int_{0}^{1}A_{-1}S_{-1}(s)u(1-s)\mathrm{d}s\\
 	={}&\fs{\sum_{n=0}^{\infty}\int_{t_{n+1}}^{t_{n}}A_{-1}S_{-1}(s)u(1-s)\mathrm{d}s}\\
 	={}&\sum_{n=0}^{\infty}\int_{t_{n+1}}^{t_{n}}\frac{\mathrm{d}}{\mathrm{d}s}S_{-1}(s)u(1-s)\mathrm{d}s\\
	={}&\sum_{n=0}^{\infty}(S(t_{n})y_{n}-S(t_{n+1})y_{n}),
 \end{align*}
 where the involved integrals and sums converge in $X_{-1}$ with limit in $X$. Upon considering a subsequence, assume that $\sum_{n=0}^{\infty}t_{n}^{-1}\|y_{n+1}-y_{n}\|_{-1}<\infty$. Without loss of generality we can set $y_{0}=0$. By using that the semigroup is in fact analytic, which follows from the assumption that $A_{-1}$ is $\rL^{\infty}$-admissible, e.g.\ by \cite[Proposition 9]{JacSchwZw} and Proposition \ref{prop:Adm},
 \begin{align*}
 \|S(t_{n+1})(y_{n}-y_{n+1})\|\leq{}& \|AS(t_{n+1})\|\|y_{n}-y_{n+1}\|_{-1}\\
 \leq{}& Ct_{n+1}^{-1}\|y_{n}-y_{n+1}\|_{-1}, 
 \end{align*}
 and thus $\sum_{n=0}^{\infty}S(t_{n+1})(y_{n}-y_{n+1})$ converges in $X$ absolutely. Combining this with the above shows that
 \begin{align*}
 S(t_{N})y_{N}=&{}\sum_{n=0}^{N-1}S(t_{n+1})y_{n+1}-S(t_{n})y_{n}\\
  ={}&\fs{\sum_{n=0}^{N-1}(S(t_{n+1})y_{n}-S(t_{n})y_{n}) -\sum_{n=0}^{N-1}S(t_{n+1})(y_{n}-y_{n+1})}
 \end{align*}
 converges for $N\to\infty$ in the $X_{-1}$-norm with a limit in $X$. Since
 \begin{align*}
 \|S(t_{N})y_{N}-A_{-1}x\|_{-1}\leq{}&\|S(t_{N})y_{N}-S(t_{N})A_{-1}x\|_{-1}+\|(S(t_{N})-I)A_{-1}x\|_{-1} \\
 \leq{}&M\|y_{N}-A_{-1}x\|_{-1}+\|S(t_{N})A_{-1}x-A_{-1}x\|_{-1},
 \end{align*}
we have that the $X_{-1}$-limit of $S(t_{N})y_{N}$  equals $A_{-1}x$ which is the $X_{-1}$-limit of the sequence $(y_{n})$.
 Thus, $A_{-1}x\in X$, and therefore $x=A^{-1}A_{-1}x\in D(A)$.
Hence, $Fav(\cS)=D(A)$ since the other inclusion holds trivially. 
 \end{proof}
Note that the assumption of $\rL^{\infty}$-admissibility in Theorem \ref{thm:Fav} cannot be relaxed to $C$-admissibility; see Example \ref{ex:Kato} where $Fav(\cS)\neq D(A)$, which can be checked directly, or by Theorem \ref{thm:main} below.

\fs{For what follows it will be crucial to use the sun-dual space \[X^{\odot}=\{x^{*}\in X^{*}\colon \lim_{t\to0^{+}}\|S^{*}(t)x^{*}-x^{*}\|=0\}\] associated with the semigroup $\cS$ and which is a closed subspace of $X^{*}$. This allows us to define the following norm,
\begin{equation}
\label{eq:normX}
 |||x|||=\sup_{x^{\odot}\in X^{\odot}, \|x^{\odot}\|_{X^{*}}\leq 1}|\langle x,x^{\odot}\rangle|, \quad x\in X,
\end{equation}
which is known to be equivalent to $\|\cdot\|$ on $X$, see e.g.\ \cite[p.~7]{Nee}.} Furthermore, it is clear that the mapping
\begin{equation}\label{eq:j}
j:X\to X^{\odot*}, x\mapsto (x^{\odot}\mapsto \langle x^{\odot},x\rangle)
\end{equation}
is an isometry when $X$ is equipped with $|||\cdot|||$. Note that $j$ is in general not isometric when the norm $\|\cdot\|$ is considered on $X$. However, if $X^{\odot}=X^{*}$, then $j$ equals the canonical isometry from $X$ in its bidual.

By a result due to van Neerven, \cite[Theorem 3.2]{vNee91a}, see also \cite[Theorem 3.2.9]{Nee}, the property that $Fav(\cS)=D(A)$ is equivalent to the condition that the set $R(\lambda,A)K_{(X,|||\cdot|||)}$ is closed in $X$ for some (hence all) $\lambda\in \rho(A)$, where $K_{(X,|||\cdot|||)}$ refers to $\{x\in X\colon |||x|||\leq 1\}$. We will  employ this fact in the following. We emphasize that the use of the $|||\cdot|||$-norm is crucial here as the corresponding statement involving the $\|\cdot\|$-norm does not hold in general, see \cite[Example 3.2.11]{Nee}.
\\ It is not hard to see that $Fav(\cS)=D(A)$ is satisfied for all semigroups whenever $X$ is reflexive, see e.g.\ \cite{Nee} and \cite[Corollary II.5.21]{EN}. This also shows that the converse of Theorem \ref{thm:Fav} is not true. Furthermore, the case $X=c_{0}$ is special as $Fav(\cS)=D(A)$ implies that $\cS$ is uniformly continuous then, \cite[Theorem 3.2.10]{Nee}.
The latter result rests on non-trivial fact of the geometry on $c_{0}$, which, loosely speaking, guarantees that a given operator $R:c_{0}\to Y$, with some arbitrary Banach space $Y$, is an isomorphism under comparably little information on $R$. To make this more explicit, let us introduce the following notions, which will be used subsequently. 
\begin{definition}[Semi-embeddings and $G_{\delta}$-embeddings]
Let $X$ and $Y$ be Banach spaces. An injective bounded linear operator $R:X\to Y$ is called 
\begin{itemize}
\item  a \emph{semi-embedding} if $R(\{x\in X\colon \|x\|\leq 1\})$ is closed in $Y$; or
\item  a \emph{$G_{\delta}$-embedding} if $R(M)$ is a $G_{\delta}$-set for any closed bounded set $M$ in $X$.
\end{itemize}
\end{definition}
Semi-embeddings were first studied by Lotz, Peck and Porta in \cite{LotzPeckPort79} and further investigated by Bourgain and Rosenthal in \cite{BourRose83}, who introduced the notion of a $G_{\delta}$-embedding. The latter was partially motivated by the fact that the property of $R$ being a semi-embedding is neither inherited by restrictions to closed subspaces nor invariant under isomorphisms. We collect the following facts for later reference.
\begin{lemma}[Bourgain--Rosenthal \cite{BourRose83}]\label{lem:BR}
Let $X$, $Y$ be Banach spaces and $R\in \mathcal{L}(X,Y)$. Then the following assertions hold.
\begin{enumerate}
\item\label{lem:BR1} If $R$ is a semi-embedding and $X$ is separable then  $R$ is a $G_{\delta}$-embedding. 
\item If $R$ is a $G_{\delta}$-embedding, then $R|_{Z}:Z\to Y$ and $RS$ are $G_{\delta}$-embeddings, for any closed subspace $Z\subset X$ and any isomorphism $S:W\to X$.  \label{lem:BR2}
\item  \label{lem:BR3} If $X=c_{0}$ and $R$ is a $G_{\delta}$-embedding, then $R$ is bounded from below, i.e.\ there exists $C>0$ such that 
\[ \|Rx\|\geq C\|x\|, \qquad x\in X.\]
\end{enumerate}
\end{lemma}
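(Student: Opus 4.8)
The three parts are of very different depth: \eqref{lem:BR2} is elementary, while \eqref{lem:BR1} and \eqref{lem:BR3} are the substantive results of Bourgain--Rosenthal, descriptive-set-theoretic in nature, with \eqref{lem:BR3} additionally resting on the geometry of $c_{0}$. The plan is to dispose of \eqref{lem:BR2} directly from the definitions, then prove \eqref{lem:BR1} by a Polish-space argument, and finally establish \eqref{lem:BR3} by contraposition combined with a Baire-category obstruction.

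For \eqref{lem:BR2}, both claims follow at once. A set that is closed and bounded in the closed subspace $Z$ is closed and bounded in $X$; since $R|_{Z}=R$ there and $R$ is injective, every such set has $G_{\delta}$ image, so $R|_{Z}$ is a $G_{\delta}$-embedding. For an isomorphism $S\colon W\to X$, the map $S$ carries closed bounded sets to closed bounded sets (being a bounded homeomorphism), so for closed bounded $N\subseteq W$ we have $(RS)(N)=R(S(N))$, which is $G_{\delta}$; as $RS$ is injective, it is a $G_{\delta}$-embedding.

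For \eqref{lem:BR1}, I would first reduce to a separable range: replacing $Y$ by the separable Banach space $\overline{\ran R}$, a closed (hence $G_{\delta}$) subspace of $Y$, does not affect whether images are $G_{\delta}$. By homogeneity it suffices to treat a closed $M\subseteq\{x:\|x\|\leq1\}$. Then $M$ is a closed subset of the separable Banach space $X$, hence Polish, and $R|_{M}$ is a continuous injection into a Polish space, so $R(M)$ is at least Borel. The point is to upgrade this to $G_{\delta}$, and here the semi-embedding hypothesis is decisive: the closedness of every dilate and translate $y_{0}+r\,R(\{x:\|x\|\leq1\})$ is exactly what lets one realise the inverse $(R|_{M})^{-1}\colon R(M)\to M$ as a pointwise limit of continuous maps, i.e.\ as a function of Baire class one, whence $R(M)$ is $G_{\delta}$ by a standard argument (e.g.\ via Alexandrov's theorem, exhibiting $R(M)$ as Polish in its subspace topology). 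Verifying the Baire-class-one property from the closed-ball hypothesis is the technical heart of \eqref{lem:BR1}, and here I would follow \cite{BourRose83}.

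For \eqref{lem:BR3}, I would argue by contraposition. Assuming $R\colon c_{0}\to Y$ is not bounded below, failure of boundedness below yields a normalized sequence with $\|Rx_{n}\|\to0$; such a sequence has no norm-convergent subsequence, since its norm-one limit $x$ would satisfy $Rx=0$, which is impossible as $R$ is injective. A standard gliding-hump (Bessaga--Pe\l czy\'nski) selection then produces seminormalized successive blocks $(u_{k})$ of the unit-vector basis with $\sum_{k}\|Ru_{k}\|<\infty$, which after normalization span an isometric copy of $c_{0}$; using \eqref{lem:BR2} to restrict $R$ to this copy and precompose with the natural isomorphism, I may assume outright that $R\colon c_{0}\to Y$ is a $G_{\delta}$-embedding with $\sum_{n}\|Re_{n}\|<\infty$. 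Now set $v_{A}:=\sum_{n\in A}e_{n}$ for finite $A\subseteq\NN$ and $M:=\{v_{A}:A\text{ finite}\}$; distinct $v_{A}$ are supported on coordinate sets differing somewhere, hence are pairwise at distance $1$, so $M$ is closed, bounded and countable in $c_{0}$. The summability of $\|Re_{n}\|$ makes $\Phi\colon\{0,1\}^{\NN}\to Y$, $\Phi(\alpha)=\sum_{n}\alpha_{n}Re_{n}$, well defined and continuous on the Cantor space, and the image under $\Phi$ of the finite subsets equals $R(M)$; since the finite subsets are dense in $\{0,1\}^{\NN}$ and $\Phi(\{0,1\}^{\NN})$ is compact, we get $\overline{R(M)}=\Phi(\{0,1\}^{\NN})$ with $R(M)$ dense in it. This closure is perfect: for any $\alpha$, flipping a coordinate $n_{k}\to\infty$ gives points $\Phi(\alpha)\pm Re_{n_{k}}\to\Phi(\alpha)$ distinct from $\Phi(\alpha)$ (as $Re_{n_{k}}\neq0$), so no point is isolated. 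Hence $R(M)$ is a countable dense subset of a nonempty perfect compact metric space, therefore meager; but a dense $G_{\delta}$ in such a Baire space is comeager, so $R(M)$ is not $G_{\delta}$ in $\overline{R(M)}$, and thus not in $Y$, contradicting that $R$ is a $G_{\delta}$-embedding. The delicate part, and the one genuinely using the geometry of $c_{0}$, is that disjointly supported normalized blocks stay uniformly separated in $c_{0}$ (keeping $M$ closed) while their images are forced to cluster densely with perfect closure; I expect the block selection together with this simultaneous control to be the main obstacle.
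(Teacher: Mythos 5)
Your proposal is correct, but it is considerably more self-contained than the paper's treatment, which disposes of the lemma by pure citation: the paper refers \eqref{lem:BR1} and \eqref{lem:BR3} to \cite[Prop.~1.8 and Prop.~2.2]{BourRose83} and declares \eqref{lem:BR2} clear from the definitions. Your proof of \eqref{lem:BR2} is exactly that definitional argument. For \eqref{lem:BR3} you supply a genuine proof where the paper gives none; it is in substance the Bourgain--Rosenthal argument, and your category endgame is sound: the $1$-separated set $M$ of finite $\{0,1\}$-sums is closed and bounded, $\overline{R(M)}=\Phi(\{0,1\}^{\NN})$ is compact and perfect (the coordinate-flip step needs only injectivity and $\|Re_{n}\|\to0$), and a countable dense subset of a nonempty perfect compact metric space is meager, hence cannot be a relative $G_{\delta}$. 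One step you gloss: the Bessaga--Pe\l czy\'nski selection does not apply to $(x_{n})$ itself, since a normalized sequence with $\|Rx_{n}\|\to0$ need not be coordinatewise null; one must first pass to a coordinatewise convergent subsequence and take consecutive differences, which are seminormalized precisely because---as you observe---no subsequence is norm-convergent, still satisfy $\|Rd_{k}\|\to0$, and only then truncate to exactly disjointly supported blocks, with the perturbations kept small enough to preserve summability. For \eqref{lem:BR1} you offer a road map whose technical core you explicitly delegate back to \cite{BourRose83}; relative to the paper, which also only cites, this is no loss, and your key intermediate claim can even be substantiated cheaply: by injectivity, $R(\overline{B}(x_{0},r)\cap B_{X})=R(\overline{B}(x_{0},r))\cap R(B_{X})$ is closed, so by a Lindel\"of covering of open sets by closed balls the inverse of $R|_{B_{X}}$ pulls open sets back to $F_{\sigma}$ sets, which is the Baire-class-one property your sketch requires.
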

The proofs of \eqref{lem:BR1} and \eqref{lem:BR3} can be found in \cite[Prop.~1.8 and Prop.~2.2]{BourRose83}. The other assertion is clear by definition.

We are now able to prove our main result.

\begin{theorem}\label{thm:main}
Let $\cS$ be a strongly continuous semigroup on a Banach space $X$ with generator $A$.
Then the following assertions are equivalent 
\begin{enumerate}
\item\label{main:it1} $A_{-1}$ is $\rL^{\infty}$-admissible,
\item \label{main:it2}  $Fav(\cS)=D(A)$ and $\cS$ satisfies $\rC$-maximal-regularity,
\item\label{main:it3}  $A$ is a bounded operator.
\end{enumerate}
\end{theorem}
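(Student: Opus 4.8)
The plan is to prove the theorem by establishing the cycle \eqref{main:it3} $\Rightarrow$ \eqref{main:it1} $\Rightarrow$ \eqref{main:it2} $\Rightarrow$ \eqref{main:it3}, with all the weight resting on the last implication. The implication \eqref{main:it3} $\Rightarrow$ \eqref{main:it1} is immediate: if $A\in\cL(X)$ then $X_{-1}=X$, $A_{-1}=A$, and for $u\in\rL^{\infty}((0,\tau),X)$ one estimates $\|\int_{0}^{\tau}S(\tau-s)Au(s)\,\dd s\|\leq \tau\sup_{s\in[0,\tau]}\|S(s)\|\,\|A\|\,\|u\|_{\infty}$, so $A_{-1}$ is $\rL^{\infty}$-admissible. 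For \eqref{main:it1} $\Rightarrow$ \eqref{main:it2}, Theorem \ref{thm:Fav} already gives $Fav(\cS)=D(A)$; moreover, since $\rC([0,\tau],X)\subseteq\rL^{\infty}([0,\tau],X)$ isometrically, $\rL^{\infty}$-admissibility of $A_{-1}$ forces $\rC$-admissibility, which by Proposition \ref{propMR} is equivalent to $\rC$-maximal-regularity.

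The crux is \eqref{main:it2} $\Rightarrow$ \eqref{main:it3}, which I would argue by contradiction. Suppose $A$ is unbounded. Since $\cS$ satisfies $\rC$-maximal-regularity, Proposition \ref{thm:Baillon} supplies a Baillon space $Z\subseteq X$, isomorphic to $c_{0}$ via some isomorphism $\iota\colon c_{0}\to Z$, together with a sequence $(z_{n})_{n\in\NN}$ with $\inf_{n}\|z_{n}\|>0$ and $R(\lambda,A)z_{n}\to0$ for every $\lambda\in\rho(A)$. On the other hand, I would read the hypothesis $Fav(\cS)=D(A)$ through van Neerven's characterization recalled above: for every $\lambda\in\rho(A)$ the (injective, bounded) resolvent $R(\lambda,A)\colon(X,|||\cdot|||)\to X$ is a \emph{semi-embedding}. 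The aim is to combine these two facts via Lemma \ref{lem:BR} to force $R(\lambda,A)$ to be bounded below on $Z$, which together with $\inf_{n}\|z_{n}\|>0$ contradicts $R(\lambda,A)z_{n}\to0$.

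I expect the main obstacle to be that Lemma \ref{lem:BR} upgrades a semi-embedding to a $G_{\delta}$-embedding only on a \emph{separable} domain, whereas $X$ need not be separable, and that semi-embeddings (unlike $G_{\delta}$-embeddings) do not pass to closed subspaces, so one cannot naively restrict to $Z$. I would resolve this by passing to a separable $\cS$-invariant subspace. Concretely, fixing a countable dense subset $(w_{k})$ of $Z$ and setting $X_{0}:=\overline{\mathrm{span}}\{S(t)w_{k}:t\geq0,\ k\in\NN\}$ yields, by strong continuity and density of $\mathbb{Q}$ in $\RR_{+}$, a separable, closed, $\cS$-invariant subspace containing $Z$. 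For $\Re\lambda$ large the Laplace-transform representation of $R(\lambda,A)$ gives $R(\lambda,A)X_{0}\subseteq X_{0}$, so the restricted semigroup $\cS_{0}:=\cS|_{X_{0}}$ has generator $A_{0}$ with $R(\lambda,A_{0})=R(\lambda,A)|_{X_{0}}$; and since $X_{0}$ is closed and $\cS$-invariant one checks directly that $Fav(\cS_{0})=D(A)\cap X_{0}=D(A_{0})$. Applying van Neerven's characterization now to $\cS_{0}$ shows that $R(\lambda,A_{0})$ is a semi-embedding on $(X_{0},|||\cdot|||_{0})$, and separability of $X_{0}$ upgrades it to a $G_{\delta}$-embedding by Lemma \ref{lem:BR}\eqref{lem:BR1}.

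With $R(\lambda,A_{0})$ a $G_{\delta}$-embedding on the separable space $X_{0}$, Lemma \ref{lem:BR}\eqref{lem:BR2} permits restricting to the closed subspace $Z$ and precomposing with the isomorphism $\iota\colon c_{0}\to Z$, producing a $G_{\delta}$-embedding $R(\lambda,A)|_{Z}\circ\iota\colon c_{0}\to X_{0}$. By Lemma \ref{lem:BR}\eqref{lem:BR3} this map is bounded below, so for some $C>0$ we have $\|R(\lambda,A)z_{n}\|\geq C\|\iota^{-1}z_{n}\|\geq C\|\iota\|^{-1}\|z_{n}\|\geq C\|\iota\|^{-1}\inf_{m}\|z_{m}\|>0$ for all $n$, contradicting $R(\lambda,A)z_{n}\to0$. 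Hence the assumption that $A$ is unbounded is untenable, proving \eqref{main:it2} $\Rightarrow$ \eqref{main:it3}. The genuinely delicate point is the construction of the separable $\cS$-invariant $X_{0}$ preserving the Favard identity, since this is exactly what lets the non-restrictable semi-embedding be converted into a restrictable $G_{\delta}$-embedding; the remaining steps are careful bookkeeping of the Bourgain--Rosenthal lemma.
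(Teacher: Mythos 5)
Your proof is correct and follows essentially the same route as the paper: the paper's proof of \eqref{main:it2}$\Rightarrow$\eqref{main:it3} likewise passes to the smallest closed $\cS$-invariant (hence separable) subspace $\tilde{Z}$ containing the Baillon space, uses van Neerven's characterization to get a semi-embedding of $R(\lambda,A)|_{\tilde Z}$ with respect to $|||\cdot|||$, upgrades it to a $G_{\delta}$-embedding via separability, restricts to $Z\cong c_{0}$ by Lemma \ref{lem:BR}\eqref{lem:BR2}, and derives the same lower bound contradicting $R(\lambda,A)z_{n}\to0$. Your additional verifications (that $Fav(\cS_{0})=D(A_{0})$ for the restricted semigroup and that $R(\lambda,A_{0})=R(\lambda,A)|_{X_{0}}$) are correct and merely make explicit what the paper asserts.
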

\begin{proof}
The  implication \eqref{main:it3}$\Rightarrow$\eqref{main:it1} is clear as admissibility of $A_{-1}=A$ is trivial when $A$ is bounded. Furthermore,  \eqref{main:it1}$\Rightarrow$\eqref{main:it2}  follows by Theorem \ref{thm:Fav} and since $\rL^{\infty}$-admissibility implies $\rC$-admissibility of $A_{-1}$ which is equivalent to $\cS$ satisfying $\rC$-maximal-regularity, Proposition \ref{propMR}.
\\
Hence, it remains to show \eqref{main:it2}$\Rightarrow$\eqref{main:it3}.
\fs{We assume that} $Fav(\cS)=D(A)$ and that $\cS$ satisfies $\rC$-maximal-regularity. Suppose that $A$ is unbounded. Thus we may  consider the 
 Baillon space $Z$ as given in Proposition \ref{thm:Baillon}. Let $\tilde{Z}$ be the smallest closed $\cS$-invariant subspace containing $Z$.  Since 
\[\tilde{Z}=\overline{\{S(t)z\colon z\in Z,t\ge0\}},\]
it is easy to see that $\tilde{Z}$ is separable, since $Z$ is separable. \fs{Let $\tilde{\cS}$ denote the restricted semigroup on $\tilde{Z}$ whose generator $\tilde{A}$ is given by $D(\tilde{A})=D(A)\cap \tilde{Z}$ and $\tilde{A}z=Az$ for $z\in D(\tilde{A})$.  Since by assumption $D(A)=Fav(\cS)$, it holds that $D(\tilde{A})=Fav(\tilde{\cS})$ 
and thus, by \cite[Theorem 3.2.9]{Nee}, $R(\lambda,\tilde{A})K=R(\lambda,A)K$ is $\|\cdot\|$-norm-closed in $X$, where $K=\{x\in \tilde{Z}\colon |||x|||\leq 1\}$.
 Therefore, $R(\lambda,A)|_{\tilde{Z}}$ is a semi-embedding from the separable space $(\tilde{Z},|||\cdot|||)$ to $X$ and, hence, 
 a $G_{\delta}$-embedding by Lemma \ref{lem:BR}\eqref{lem:BR1}. }Since the property of being a $G_{\delta}$-embedding is inherited by restrictions on closed subspaces and invariant under isomorphisms, Lemma \ref{lem:BR}\eqref{lem:BR2}, we infer that also $R(\lambda,A)|_{Z}:Z\to X$ is a $G_{\delta}$-embedding, where $Z$ is now equipped with the norm $\|\cdot\|$, which is equivalent to $|||\cdot|||$.
Since $Z$ is isomorphic to $c_{0}$, we conclude by Bourgain--Rosenthal, Lemma \ref{lem:BR}\eqref{lem:BR3} and \ref{lem:BR}\eqref{lem:BR2}, that $R(\lambda,A)|_{Z}$ is bounded from below. Hence, there exists a constant $C>0$ such that 
\[ \|R(\lambda,A)x\|\geq C\|x\|, \qquad x\in Z.\]
 This, however, contradicts the property of a Baillon space that $R(\lambda,A)z_{n}$ tends to $0$ as $n\to\infty$, Proposition \ref{thm:Baillon}\eqref{it24}, where $(z_{n})_{n\in\mathbb{N}}$ is the sequence spanning $Z$, for which it holds that $\inf_{n}\|z_{n}\|>0$, Proposition  \ref{thm:Baillon}\eqref{it23}.
\end{proof}
In the study of adjoint semigroups (on non-reflexive spaces), the notion of sun-reflexivity---linking the space and the semigroup---is classical. Recall that $X$ is sun-reflexive (or ``$\odot$-reflexive'') for a given strongly continuous semigroup $\cS$ on $X$ if the isometry $j$ defined in \eqref{eq:j} maps $X$ onto $X^{\odot\odot}=(X^{\odot})^{\odot}$. Obviously, if $X$ is reflexive, then $X$ is sun-reflexive with respect to any strongly continuous semigroup $\cS$ on $X$. Also note that by de Pagter's result \cite{dePa89}, sun-reflexivity can be reformulated by the condition that the resolvent is weakly compact.
In the following we show that if $X$ is non-reflexive, but sun-reflexive with respect to $\cS$, then $A_{-1}$ cannot be $\rL^{\infty}$-admissible.
\begin{corollary}\label{cor:MaxReg}
Let $\cS$ be a strongly continuous semigroup such that $X$ is sun-reflexive. If $A_{-1}$ is $\rL^{\infty}$-admissible, then $A$ is bounded and $X$ is reflexive.
\end{corollary}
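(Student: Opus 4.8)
The first assertion is immediate: by Theorem~\ref{thm:main}, the $\rL^{\infty}$-admissibility of $A_{-1}$ already forces $A$ to be bounded, so that $\cS$ is uniformly continuous. It therefore only remains to derive reflexivity of $X$ from sun-reflexivity together with the uniform continuity of $\cS$.

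The plan is to exploit that uniform continuity trivializes the entire sun-dual construction. First I would observe that, since $A$ is bounded, so is $A^{*}$, and $\|S^{*}(t)-I\|=\|S(t)-I\|\to0$ as $t\to0^{+}$; hence the adjoint semigroup $\cS^{*}$ is itself uniformly continuous on $X^{*}$. Consequently $\lim_{t\to0^{+}}\|S^{*}(t)x^{*}-x^{*}\|=0$ for every $x^{*}\in X^{*}$, so that $X^{\odot}=X^{*}$. By the remark following \eqref{eq:j}, the map $j$ is then precisely the canonical isometric embedding of $X$ into its bidual $X^{**}$.

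Next I would iterate the construction. On $X^{\odot}=X^{*}$ the restricted semigroup $\cS^{\odot}=\cS^{*}$ is uniformly continuous, and so is its adjoint on $X^{**}$, by the same norm identity. Applying the definition of the sun-dual once more to the pair $(\cS^{\odot},X^{\odot})=(\cS^{*},X^{*})$ therefore yields $X^{\odot\odot}=(X^{*})^{\odot}=X^{**}$. Since $X$ is sun-reflexive, $j$ maps $X$ onto $X^{\odot\odot}=X^{**}$; as $j$ is the canonical embedding, this says exactly that $X$ is reflexive.

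A slicker alternative route uses de Pagter's reformulation recalled above: sun-reflexivity of $X$ with respect to $\cS$ is equivalent to weak compactness of $R(\lambda,A)$ for some (hence all) $\lambda\in\rho(A)$. Once $A$ is bounded, $\lambda-A\in\cL(X)$ and the identity factorizes as $I=(\lambda-A)R(\lambda,A)$; since the weakly compact operators form a two-sided operator ideal, $I$ is then weakly compact, i.e.\ the closed unit ball of $X$ is weakly compact, and Kakutani's theorem yields reflexivity of $X$. In either approach the only delicate point is the bookkeeping of the iterated sun-dual---equivalently, the claim $X^{\odot}=X^{*}$---which I expect to be the main thing to verify carefully, but it follows routinely from the uniform continuity of $\cS$ and of its adjoints.
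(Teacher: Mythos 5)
Your primary argument is correct and coincides with the paper's own proof: invoke Theorem~\ref{thm:main} to get boundedness of $A$, then use uniform continuity of $\cS$ to conclude $X^{\odot\odot}=X^{**}$ and that $j$ from \eqref{eq:j} is the canonical embedding, so sun-reflexivity forces reflexivity. The paper dismisses this second step as ``clear''; your bookkeeping of the iterated sun-dual (uniform continuity passes to $\cS^{*}$ via $\|S^{*}(t)-I\|=\|S(t)-I\|$, hence $X^{\odot}=X^{*}$ and $(X^{*})^{\odot}=X^{**}$) is exactly the verification being elided, and it is sound.

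Your ``slicker alternative'' via de Pagter is also valid and is not in the paper: since the weakly compact operators form a two-sided ideal, $I=(\lambda-A)R(\lambda,A)$ with $\lambda-A\in\cL(X)$ makes the identity weakly compact, giving reflexivity directly without any sun-dual bookkeeping. Note, though, that this differs from the paper's own alternative proof of the corollary, which avoids Theorem~\ref{thm:main} altogether: there one uses Theorem~\ref{thm:Fav} to get $Fav(\cS)=D(A)$, deduces the Radon--Nikodym property of $X$ from sun-reflexivity via \cite[Theorem 6.2.14]{Nee}, and rules out unboundedness of $A$ by Baillon's theorem (a copy of $c_{0}$ would contradict the Radon--Nikodym property). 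Your de Pagter route still leans on the full strength of Theorem~\ref{thm:main} for boundedness and only streamlines the reflexivity step; the paper's alternative trades the other way, replacing the main theorem by softer ingredients.
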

\begin{proof}
By Theorem \ref{thm:main}, we only have to show that $X$ is reflexive. This, however, is clear since $\cS$ is uniformly continuous which implies that $X^{\odot\odot}=X^{**}$ and that $j$ defined in \eqref{eq:j} equals the canonical embedding of $X$ in $X^{**}$.
\end{proof}
We point out that Corollary \ref{cor:MaxReg} can also be proved without referring to Theorem \ref{thm:main}. Indeed, by Theorem \ref{thm:Fav}, we know that $Fav(\cS)=D(A)$. Since $X$ is sun-reflexive,  this fact together with \cite[Theorem 6.2.14]{Nee} implies that $X$ must have the Radon-Nikodym property. Now $A$ must be bounded because otherwise Baillon's theorem shows that $X$ contains $c_{0}$ which contradicts that $X$ has the Radon-Nikodym property. That  $X$ is reflexive now follows in the same way as in  the other proof of the corollary.

\section{Comments on the discrete-time case}\label{sec:discrete}
The study of discrete-time versions of maximal regularity was initiated by Blunck \cite{Blun01b,Blun01a} and, with a focus on the extremal cases $\ell^{\infty}$ and $\ell^{1}$, further studied by Kalton and Portal in \cite{Port03} and \cite{KaltPort08}. For a power-bounded operator $T\in\mathcal{L}(X)$, we consider the solutions to 
\begin{align}\label{eq:discrete}
	x_{n}={}&Tx_{n-1}+Bu_{n},\qquad n\in\NN,\\
	x_{0}={}&0,\notag
\end{align}
where $(u_{n})_{n\in\NN}$ is in $\ell^{\infty}(\NN,X)$, the space of $X$-valued bounded sequences. 

\begin{definition}[discrete-time admissibility]
Let $T\in \mathcal{L}(X)$ be power-bounded, and $p\in[1,\infty]$. We say that $B\in \mathcal{L}(U,X)$ is an \emph{$\ell^{p}$-admissible control operator} for $T$ if  there exists $\kappa>0$ such that
\[ \left\|\sum_{k=1}^{n}T^{n-k}Bu_{k}\right\|\leq \kappa\|u\|_{\ell^{p}(\{1,\dots,n\},U)},\quad \forall n\in\NN, u\in \ell^{p}(\NN,U).\]
Furthermore, $C\in \mathcal{L}(X,Y)$ is called an \emph{$\ell^{p}$-admissible observation operator} for $T$ if  there exists $\tilde{\kappa}>0$ such that
\[ \left\|\left(CT^{k}x\right)_{k\in\NN}\right\|_{\ell^{p}(\NN,X)} \leq \tilde{\kappa}\|x\|,\qquad x\in X.\]
\end{definition}
We will always explicitly state whether an operator is $\ell^{p}$-admissible as observation or control operator in order to avoid confusion. This is contrast to the notation Section \ref{Sec:MaxReg}, where this was clear from the context. 
\begin{remark}\begin{itemize}
\item 
Note that by a uniform boundedness principle argument, if $p\in [1,\infty)$, an operator $B\in \mathcal{L}(U,X)$ is an $\ell^{p}$-admissible control operator if and only if the limit $\lim_{n\to\infty}\sum_{k=1}^{n}T^{n-k}Bu_{k}$ exists and
\[ \left\|\lim_{n\to\infty}\sum_{k=1}^{n}T^{n-k}Bu_{k}\right\| \leq \kappa \|u\|_{\ell^{p}},\quad u\in\ell^{p}(\NN,X).\]
For $p=\infty$, an analogous statement holds with $\ell^{\infty}$ replaced by $c_{0}$. 
\item Comparing the definitions of discrete-time and continuous-time admissibility leads to the following observation. Whereas our definition for continuous-time deals with fixed (and finite) times $\tau>0$, the property in the discrete case is connected to uniform estimates in $n$. 
We point out that, without loss of generality, we could have restricted ourselves to \emph{infinite-time admissible operators} in Section \ref{Sec:MaxReg} as well.
\end{itemize}
\end{remark}

On the other hand, the operator $T$ is said to satisfy \emph{$\ell^{p}$-maximal regularity} for $p\in[1,\infty]$ if for $B=I$ the solution $(x_{n})_{n\in\NN}$ to \eqref{eq:discrete} satisfies
\[(x_{n}-x_{n-1})_{n\in\NN}\in \ell^{p}(\mathbb{N},X),\qquad \forall (u_{n})_{n\in\NN}\in \ell^{p}(\mathbb{N},X).\]
Note that this is equivalent to the existence of some constant $\kappa>0$ such that 
\begin{equation}\label{eq:discrmaxreg}
\left\| \left(\sum_{k=1}^{n}T^{n-k}(I-T)u_{k}\right)_{n\in\NN}\right\|_{\ell^{p}(\NN,X)}\leq \kappa \|u\|_{\ell^{p}(\NN,X)},\quad u\in \ell^{p}(\NN,X).
\end{equation}
The following result was implicitly shown in \cite{KaltPort08} without using the notion of admissibility.
\begin{proposition}\label{prop:discr}
	Let $T\in\mathcal{L}(X)$ be power-bounded. Then the following assertions are equivalent.
	\begin{enumerate}
		\item\label{prop:discri1}	 $T$ satisfies $\ell^{\infty}$-maximal regularity,		
		\item\label{prop:discri2}      $T^{*}$ satisfies $\ell^{1}$-maximal regularity,
		\item\label{prop:discri3}      $I-T$ is an $\ell^{\infty}$-admissible control operator,
		\item\label{prop:discri4}      $I-T^{*}$ is an $\ell^{1}$-admissible observation operator.
	\end{enumerate}
\end{proposition}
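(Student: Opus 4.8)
The plan is to prove the cycle \eqref{prop:discri1}$\Leftrightarrow$\eqref{prop:discri3}$\Leftrightarrow$\eqref{prop:discri4}$\Leftrightarrow$\eqref{prop:discri2}, where the first link merely unwinds the definitions, the second is a duality argument between control and observation operators, and the third is the discrete counterpart of the continuous-time identity in Proposition~\ref{propMR1a}\eqref{propMR1} (and was, as noted, implicitly contained in \cite{KaltPort08}).

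First I would treat \eqref{prop:discri1}$\Leftrightarrow$\eqref{prop:discri3}. With $U=X$ and $B=I-T$, the partial sum $\sum_{k=1}^{n}T^{n-k}(I-T)u_{k}$ in \eqref{eq:discrmaxreg} is exactly the control map evaluated at time $n$. Thus $\ell^{\infty}$-maximal regularity asserts that the sequence operator $u\mapsto\big(\sum_{k=1}^{n}T^{n-k}(I-T)u_{k}\big)_{n}$ is bounded on $\ell^{\infty}(\NN,X)$, while $\ell^{\infty}$-admissibility of $I-T$ is the uniform-in-$n$ bound on a single partial sum. Taking the supremum over $n$ gives \eqref{prop:discri3}$\Rightarrow$\eqref{prop:discri1}; conversely, extending a finite input $(u_{1},\dots,u_{n})$ by zero and reading off the $n$-th coordinate of the sequence operator gives \eqref{prop:discri1}$\Rightarrow$\eqref{prop:discri3}, since the zero-extension has $\ell^{\infty}(\NN)$-norm equal to $\|u\|_{\ell^{\infty}(\{1,\dots,n\})}$.

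For \eqref{prop:discri3}$\Leftrightarrow$\eqref{prop:discri4} I would use duality. Since $(I-T)^{*}=I-T^{*}$, for fixed $n$ and $x^{*}\in X^{*}$ one has $\langle T^{n-k}(I-T)u_{k},x^{*}\rangle=\langle u_{k},(I-T^{*})(T^{*})^{n-k}x^{*}\rangle$, whence
\[\left\|\sum_{k=1}^{n}T^{n-k}(I-T)u_{k}\right\|=\sup_{\|x^{*}\|\leq1}\left|\sum_{k=1}^{n}\big\langle u_{k},(I-T^{*})(T^{*})^{n-k}x^{*}\big\rangle\right|.\]
Taking the supremum over $\|u_{k}\|\leq1$ and invoking the finite-dimensional duality $\sup_{\|u\|_{\ell^{\infty}}\leq1}\big|\sum_{k}\langle u_{k},w_{k}\rangle\big|=\sum_{k}\|w_{k}\|$ with $w_{k}=(I-T^{*})(T^{*})^{n-k}x^{*}$, then reindexing $j=n-k$ and taking the supremum over $n$, the optimal constant in \eqref{prop:discri3} is seen to equal $\sup_{\|x^{*}\|\leq1}\sum_{j\geq0}\|(I-T^{*})(T^{*})^{j}x^{*}\|$, which is finite precisely when $I-T^{*}$ is an $\ell^{1}$-admissible observation operator for $T^{*}$, i.e.\ \eqref{prop:discri4}. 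Here one point requires care: although $(\ell^{\infty})^{*}\neq\ell^{1}$, for each fixed $n$ the pairing involves only finitely many coordinates, so the elementary finite-dimensional duality applies and no genuine $\ell^{\infty}$-dual subtlety intervenes.

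Finally, \eqref{prop:discri4}$\Leftrightarrow$\eqref{prop:discri2} is the step carrying the actual content. Because $I-T^{*}$ and $T^{*}$ commute, $(T^{*})^{n-k}(I-T^{*})=(I-T^{*})(T^{*})^{n-k}$, so for finitely supported $v\in\ell^{1}(\NN,X^{*})$ one estimates, swapping the order of summation via $n=k+j$,
\[\left\|\Big(\sum_{k=1}^{n}(T^{*})^{n-k}(I-T^{*})v_{k}\Big)_{n}\right\|_{\ell^{1}}\leq\sum_{k}\sum_{j\geq0}\big\|(I-T^{*})(T^{*})^{j}v_{k}\big\|\leq\tilde{\kappa}\sum_{k}\|v_{k}\|,\]
using \eqref{prop:discri4} on each column $v_{k}$; this yields \eqref{prop:discri4}$\Rightarrow$\eqref{prop:discri2} with the same constant. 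The converse is immediate on testing $\ell^{1}$-maximal regularity of $T^{*}$ against the single-point sequence $v=(x^{*},0,0,\dots)$, which reproduces exactly the observation estimate \eqref{prop:discri4}. This implication is the only one with genuine mathematical content, since the full lower-triangular convolution is dominated by its single column thanks to the $\ell^{1}$-structure, whereas all remaining implications are bookkeeping.
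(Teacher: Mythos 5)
Your proof is correct, but it takes a genuinely different route from the paper, which disposes of the equivalences \eqref{prop:discri1}$\Leftrightarrow$\eqref{prop:discri2}$\Leftrightarrow$\eqref{prop:discri4} by citing \cite[Proposition 2.3 and Theorem 3.1]{KaltPort08} and declares \eqref{prop:discri1}$\Leftrightarrow$\eqref{prop:discri3} clear from the definitions. You instead reconstruct the underlying arguments: your duality step \eqref{prop:discri3}$\Leftrightarrow$\eqref{prop:discri4}, identifying the optimal admissibility constant for $I-T$ with $\sup_{\|x^{*}\|\leq 1}\sum_{j\geq 0}\|(I-T^{*})(T^{*})^{j}x^{*}\|$, is exactly the content of the Kalton--Portal duality (and your remark that only finitely many coordinates enter each pairing, so no $(\ell^{\infty})^{*}$ subtlety arises, is the right point to make explicit); and your step \eqref{prop:discri4}$\Rightarrow$\eqref{prop:discri2}, dominating the lower-triangular convolution by its columns via Tonelli, is the discrete counterpart of \cite[Theorem 3.6]{KaltPort08} invoked in Proposition \ref{propMR1a}\eqref{propMR1}, as you correctly note. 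What the paper's route buys is brevity; what yours buys is a self-contained proof with explicit (near-optimal) constants, which in particular makes transparent why the $\ell^{1}/\ell^{\infty}$ duality works in discrete time while failing in continuous time. Two trivial tidying points: whether the sum $\sum_{j\geq 0}$ matches the paper's observation-admissibility sum over $k\in\NN$ depends on whether $\NN$ contains $0$; if not, the $j=0$ term $\|(I-T^{*})x^{*}\|\leq(1+\|T\|)\|x^{*}\|$ must be absorbed into the constant, so ``with the same constant'' should read ``up to an additive $1+\|T\|$''. Also, your restriction to finitely supported $v$ in \eqref{prop:discri4}$\Rightarrow$\eqref{prop:discri2} is unnecessary: the double sum is absolutely convergent for every $v\in\ell^{1}(\NN,X^{*})$, so the estimate holds directly without a density argument.
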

\begin{proof}
	The equivalences \eqref{prop:discri1}$\Leftrightarrow$\eqref{prop:discri2}$\Leftrightarrow$\eqref{prop:discri4} follow from \cite[Proposition 2.3 and Theorem 3.1]{KaltPort08}. The proof of \eqref{prop:discri1}$\Leftrightarrow$\eqref{prop:discri3} is clear from the definitions.
	\end{proof}
	
	The duality in Proposition \ref{prop:discr} between maximal-regularity and admissibility with respect to $\ell^{1}$ and $\ell^{\infty}$ may come as a surprise when compared to the continuous-time situation, where such a result does not hold. The reason for this rests on the fact that there is no difference between $c_{0}$-maximal regularity (replacing $\ell^{\infty}$ by $c_{0}$ in \eqref{eq:discrmaxreg}) and $\ell^{\infty}$-maximal regularity. 

In \cite[Theorem 3.5]{KaltPort08} Kalton and Portal show a version of Baillon's result for discrete-time. More precisely, they prove that if $X$ does not contain $c_{0}$ and $T$ satisfies $\ell^{\infty}$-maximal regularity, then $X=X_{1}+X_{2}$ for $T$-invariant closed subspaces $X_{1}$ and $X_{2}$ such that $T|_{X_{1}}=I_{X_{1}}$ and $r(T|_{X_{2}})<1$, where $r(\cdot)$ refers to the spectral radius. The following example demonstrates  that the assumption on $X$ cannot be dropped and  an analogous result as Theorem \ref{thm:main} does not hold in the discrete-time case.

	\begin{example}
		Let $X=c_{0}(\mathbb{N})$ and let $(e_{n})_{n\in\NN}$ refer to the canonical basis. It is easy to see that the operator $T$ defined by $Ty=\sum_{m=1}^{\infty}(1-\frac{1}{m})y_{m}e_{m}$ for $y=\sum_{n=1}^{\infty}y_{n}e_{n}$, satisfies $\ell^{\infty}$-maximal regularity. Indeed, to see this, let $x_{k}=\sum_{m=1}^{\infty}x_{k,m}e_{m}\in X$ and consider, for $n\in\NN$,
		\begin{align*}
		\left\|\sum_{k=1}^{n}T^{n-k}(I-T)x_{k}\right\|={}&\left\|\sum_{k=1}^{n}\sum_{m=1}^{\infty}\left(1-\tfrac{1}{m}\right)^{n-k}\tfrac{1}{m}x_{k,m}e_{m}\right\|\\
								={}&\sup_{m}\,\tfrac{1}{m}\left|\sum_{k=1}^{n}\left(1-\tfrac{1}{m}\right)^{n-k}x_{k,m}\right|\\
								\leq{}& \sup_{k=1,..,n}\|x_{k}\|.
		\end{align*}
		On the other hand, it is obvious that there is no decomposition of $X$ into $T$-invariant closed subspaces $X_{1}$ and $X_{2}$ such that $T|_{X_{1}}=I$ and such that the spectral radius of $T|_{X_{2}}$ is less than $1$. 
	\end{example}
\section{An alternative proof for Theorem \ref{thm:main}}
We sketch an alternative argument for showing Theorem \ref{thm:main} which avoids the notion of $G_{\delta}$-embeddings and the fact that $Fav(\cS)=D(A)$ under the additional assumption that the dual semigroup $\cS^{*}$ is strongly continuous. The key is the following lemma which can be proved by carefully studying Baillon's space.
\begin{lemma}\label{lem41}
Let $\cS$ be a strongly continuous semigroup with unbounded generator $A$. Suppose that $A_{-1}$ is $\rL^{\infty}$-admissible and let $(z_{n})_{n\in\NN}$ be the elements spanning the Baillon space $Z$ from Proposition \ref{thm:Baillon}. Then $\sum_{n=1}^{\infty}z_{n}$ converges to an element $z\in X$ in the norm $\|R(\lambda,A)\cdot\|$, i.e.
\begin{equation}\label{eq:altproof1}\lim_{N\to\infty}R(\lambda,A)\sum_{n=1}^{N}z_{n}=R(\lambda,A)z
\end{equation}
 for $\lambda\in R(\lambda,A)$.
\end{lemma}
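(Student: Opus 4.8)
The plan is to split the statement into two parts: first that the partial sums $\sigma_N=\sum_{n=1}^N z_n$ are Cauchy in $X_{-1}$, and second that their $X_{-1}$-limit in fact lies in $X$. For the first part I would use only analyticity of $\cS$, which is available since $A_{-1}$ is $\rL^\infty$-admissible. Recalling from the proof of Proposition~\ref{thm:Baillon} that $z_k=\sum_{n=p_k+1}^{p_{k+1}}s_n t_n A S(t_n)y_n$ with $\|y_n\|=1$, $(s_n)$ bounded and $\sum_n t_n<\infty$, and that $S(t_n)y_n\in D(A)$ by analyticity, one has
\[
\|R(\lambda,A)z_k\|=\Big\|\sum_{n=p_k+1}^{p_{k+1}}s_n t_n(\lambda R(\lambda,A)-I)S(t_n)y_n\Big\|\le C\sum_{n=p_k+1}^{p_{k+1}}s_n t_n,
\]
so that $\sum_k\|R(\lambda,A)z_k\|\le C\sum_n s_n t_n<\infty$. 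Hence $\sum_n z_n$ converges absolutely in $X_{-1}$ to some $w=A_{-1}p$, where $p=\sum_n s_n t_n S(t_n)y_n$ converges absolutely in $X$. This reduces the claim to showing $w\in X$, equivalently $p\in D(A)$. The key subtlety to keep in mind is that, since $(z_n)$ is a basis of $Z\cong c_0$, the sums $\sigma_N$ stay bounded in $X$ but do \emph{not} converge in $X$ (they behave like $\sum_n e_n$ in $c_0$); so $z$ cannot be obtained as an $X$-limit of $\sigma_N$, and $\rL^\infty$-admissibility must enter essentially.

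To conclude $w\in X$ the plan is to realize $w$ as the image under $\Phi_\tau$ of a bounded input: I would build a step function $u\in\rL^\infty([0,\tau],X)$ whose values are (scaled) $y_n$ on a suitable partition of $[0,\tau]$, arranged—by carefully revisiting the construction of the Baillon space—so that the telescoping operator differences produced by $\Phi_\tau$ reassemble exactly into the series defining $w$ in $X_{-1}$; then $\rL^\infty$-admissibility gives $w=\Phi_\tau u\in X$, and setting $z:=w$ yields \eqref{eq:altproof1}. Alternatively, and more directly, one may invoke $Fav(\cS)=D(A)$ (Theorem~\ref{thm:Fav}) together with van Neerven's characterization that $R(\lambda,A)K_{(X,|||\cdot|||)}$ is $\|\cdot\|$-closed: applying the isometry $R(\lambda,A_{-1})\colon X_{-1}\to X$ to $\sigma_N\to w$ gives $R(\lambda,A)\sigma_N\to R(\lambda,A_{-1})w$ in $X$, where the $\sigma_N$ lie in a fixed multiple of $K_{(X,|||\cdot|||)}$; closedness then forces $R(\lambda,A_{-1})w=R(\lambda,A)z$ for some $z\in X$, and injectivity of $R(\lambda,A_{-1})$ gives $w=z\in X$.

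The hard part is exactly this passage $w\in X$. The difficulty is that no termwise estimate can succeed: each Baillon vector satisfies $\|x_n\|\ge\delta>0$, and since $\|z_k\|\ge\inf_m\|z_m\|>0$ while $\|z_k\|\le(\sup_m\|x_m\|)\sum_{n\in\text{block }k}s_n$, every block obeys $\sum_{n\in\text{block }k}s_n\ge\inf_m\|z_m\|/\sup_m\|x_m\|$, whence $\sum_n s_n=\infty$. Consequently any crude bound on the discretization error in the $\Phi_\tau u$ representation, or a direct Favard-type estimate of $\tfrac1h\|(S(h)-I)p\|$, diverges. The content of the lemma therefore lies in extracting genuine cancellation—either by engineering the exact telescoping realization $w=\Phi_\tau u$ with $u$ bounded, which is where the fine interplay of the parameters $t_n$, $y_n$ and $s_n$ must be exploited, or by appealing to the closed-range (semi-embedding) property of the resolvent; I expect the former, self-contained route to be the main technical obstacle.
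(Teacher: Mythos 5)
The paper never actually writes out a proof of Lemma~\ref{lem41}---it is only asserted that the lemma ``can be proved by carefully studying Baillon's space''---so there is no official argument to compare against, and your proposal must be judged on its own merits. Your first half is correct and mirrors the estimate at the end of the proof of Proposition~\ref{thm:Baillon}: since $R(\lambda,A)AS(t_n)y_n=(\lambda R(\lambda,A)-I)S(t_n)y_n$ is uniformly bounded, $(s_n)$ is bounded and $\sum_n t_n<\infty$, the series $\sum_k z_k$ converges absolutely in $X_{-1}$ to $w=A_{-1}p$ with $p=\sum_n s_n t_n S(t_n)y_n\in X$, and the lemma reduces to $w\in X$, i.e.\ $p\in D(A)$. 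Of your two routes to this, the second one is complete and correct: the partial sums $\sum_{n\leq N}z_n$ are uniformly bounded in $X$ (because the Bessaga--Pe{\l}czy\'nski block basis $(z_n)$ is equivalent to the unit vector basis of $c_0$ \cite{BessPelc58,EG}---a point you should state explicitly, although the paper's alternative proof of Theorem~\ref{thm:main} also takes it for granted), Theorem~\ref{thm:Fav} together with van Neerven's characterization \cite[Theorem 3.2.9]{Nee} gives that $R(\lambda,A)K_{(X,|||\cdot|||)}$ is closed, and the limit-plus-injectivity argument identifying $w=z\in X$ is sound; there is no logical circularity, since Theorem~\ref{thm:Fav} is established independently of Section~4.

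The caveat is architectural rather than logical: Section~4 is explicitly advertised as an argument that \emph{avoids} the fact $Fav(\cS)=D(A)$, and van Neerven's closed-ball condition is precisely the semi-embedding property of the resolvent, so your second route, while a valid proof of the lemma \emph{as stated}, forfeits most of the purpose of the alternative proof (it does still avoid $G_{\delta}$-embeddings and the Bourgain--Rosenthal lower bound on $c_0$). Your first route---realizing $w=\Phi_{\tau}u$ for a genuine step input $u\in\rL^{\infty}$, which is presumably what the authors mean by ``carefully studying Baillon's space''---is the one faithful to that purpose, but you leave it as a sketch and say so. Your diagnosis of the obstruction there is accurate: since each block satisfies $\sum_{n\in\mathrm{block}}s_n\geq\delta>0$, one has $\sum_n s_n=\infty$, so termwise bounds on the discretization error diverge and genuine cancellation must be exploited; that cancellation is exactly what you do not supply. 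In sum: a correct proof of the stated lemma via the Favard/semi-embedding route, together with an honest but incomplete attack on the admissibility-only route that the paper's Section~4 intends.
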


\begin{proof}[Alternative proof of Theorem 2.9, (2)$\implies$(3), if $\cS^{*}$ is strongly continuous]
As in the main proof of the theorem, we suppose by contradiction that $A$ is  unbounded.
Observe that \eqref{eq:altproof1} from Lemma \ref{lem41} implies that the series $\sum_{n=1}^{\infty}z_{n}$ converges to $z$ in the $\sigma(X,X^*)$-topology since $X^{*}=X^{\odot}$ by the assumption that $\cS^*$ is strongly continuous. Indeed, this follows easily from the fact that for all $f\in X^{*}$,
\[\langle \sum_{n=1}^{N}z_{n}, R(\lambda,A)^*f\rangle\to\langle R(\lambda,A)z,f\rangle,\qquad N\to\infty, \]
and the fact that $(\sum_{n=1}^{N}z_{n})_{N\in\NN}$ is bounded. Therefore, $z\in Z$ since $Z$ is weakly-closed as a norm-closed space. Finally, let $f_{n}:Z\to \mathbb{C}$ denote the coefficient functionals of the Schauder basis $(z_{n})_{n\in\NN}$ of $Z$, which can be continuously extended to functionals on $X$ by  the Hahn-Banach theorem. It follows that for fixed $n\in \NN$,
\[f_{n}\left(\sum_{m=1}^{N}z_{m}\right)=\langle \sum_{m=1}^{N}z_{m},f_{n}\rangle \to f_{n}(z),\qquad (N\to\infty),\]
which implies that $f_{n}(z)=1$ for all $n\in\mathbb{N}$. This contradicts $z\in Z$. 
\end{proof}
}

\section*{Acknowledgements} 
The authors would like to thank the anonymous referees for several helpful comments on the manuscript and in particular for pointing out to us the article \cite{RiFarw20}.


\end{document}